\documentclass{amsart}

\usepackage{ae,aecompl}
\usepackage{chngcntr}
\usepackage{graphicx}
\usepackage[all,cmtip]{xy}
\usepackage{amsmath, amscd}
\usepackage{amsthm}
\usepackage{amssymb}
\usepackage{amsfonts}
\usepackage{qsymbols}
\usepackage{latexsym}
\usepackage{mathrsfs}
\usepackage{cite}
\usepackage{color}
\usepackage{url}
\usepackage{enumerate}
\usepackage[utf8]{inputenc}
\usepackage[T1]{fontenc}
\usepackage{verbatim}
\usepackage[draft=false, colorlinks=true]{hyperref}

\allowdisplaybreaks

\setlength{\unitlength}{1cm}
\setcounter{secnumdepth}{2}

\newcommand {\abs}[1]{\lvert#1\rvert}

\newcommand {\B}{{\mathcal{B}}}

\newcommand {\C}{{\mathbb C}}

\newcommand {\D}{D}

\newcommand {\ud}{\, \mathrm{d}}
\newcommand {\ue}{e}

\newcommand {\Ell}{L}
\newcommand {\Ellp}{L^{p}}

\newcommand {\F}{{\mathcal{F}}}

\newcommand {\ui}{i}
\newcommand {\I}{{I}}

\newcommand {\La}{{\mathcal{L}}}
\newcommand {\calL}{{\mathcal{L}}}

\newcommand {\Ma}{{\mathcal{M}}}
\newcommand {\N}{{{\mathbb N}}}

\newcommand {\norm}[1]{\left\|#1\right\|}
\newcommand{\one}{{{\bf 1}}}
\newcommand {\ph}{{\varphi}}

\newcommand {\R}{{\mathbb R}}

\newcommand {\Rn}{{\mathbb{R}^{n}}}

\newcommand {\Se}{\mathrm{S}}

\newcommand {\Sw}{\mathcal{S}}
\newcommand {\Schw}{\mathcal{S}}
\newcommand {\T}{{\mathbb T}}
\newcommand {\w}{{\omega}}

\newcommand{\wh}{\widehat}

\newcommand {\Z}{{{\mathbb Z}}}

\newcommand {\vanish}[1]{\relax}

\DeclareMathOperator{\Real}{Re}

\newtheorem{theorem}{Theorem}[section]
\newtheorem{lemma}[theorem]{Lemma}
\newtheorem{proposition}[theorem]{Proposition}
\newtheorem{corollary}[theorem]{Corollary}

\theoremstyle{definition}

\newtheorem{remark}[theorem]{Remark}
\newtheorem{example}[theorem]{Example}

\numberwithin{equation}{section}
\protected\def\ignorethis#1\endignorethis{}
\let\endignorethis\relax

\title{Sharp growth rates for semigroups using resolvent bounds}

\author{Jan Rozendaal}
\address{Mathematical Sciences Institute\\ Australian National University\\ Acton ACT 2601\\ Australia\\ and Institute of Mathematics, Polish Academy of Sciences\\
ul.~\'{S}niadeckich 8\\
00-656 Warsaw\\
Poland}
\email{janrozendaalmath@gmail.com}

\author{Mark Veraar}
\address{Delft Institute of Applied
Mathematics\\
Delft University of Technology\\
P.O.~Box 5031\\
2628 CD Delft\\
The Netherlands}
\email{M.C.Veraar@tudelft.nl}

\keywords{$C_{0}$-semigroup, stability, polynomial growth, Fourier multipliers, Kreiss condition, wave equation.}
\subjclass[2010]{Primary: 47D06. Secondary: 34D05, 35B40, 42B15}

\keywords{$C_{0}$-semigroup, polynomial growth, positive semigroup, Fourier multiplier, Kreiss condition, perturbed wave equation}

\thanks{The first author is supported by grant DP160100941 of the Austalian Research Council. The second author is supported by the VIDI subsidy 639.032.427 of the Netherlands Organisation for Scientific Research (NWO)}

\begin{document}

\begin{abstract}
We study growth rates for strongly continuous semigroups. We prove that a growth rate for the resolvent on imaginary lines implies a corresponding growth rate for the semigroup if either the underlying space is a Hilbert space, or the semigroup is asymptotically analytic, or if the semigroup is positive and the underlying space is an $L^{p}$-space or a space of continuous functions. We also prove variations of the main results on fractional domains; these are valid on more general Banach spaces.
In the second part of the article we apply our main theorem to prove optimality in a classical example by Renardy of a perturbed wave equation which exhibits unusual spectral behavior.
\end{abstract}

\maketitle

\section{Introduction}

Let $-A$ be the generator of a $C_{0}$-semigroup $(T(t))_{t\geq0}$ on a Banach space $X$. It can be quite difficult to verify the assumptions of the Hille--Yosida theorem to determine whether $(T(t))_{t\geq0}$ is uniformly bounded, given that bounds for all powers of the resolvent of $A$ are required. Hence it is of interest to determine spectral conditions that are easier to check and which imply specific growth behavior of $(T(t))_{t\geq0}$, such as for example polynomial growth. One such condition is the Kreiss resolvent assumption from \cite{Kreiss59}: $\sigma(A)\subseteq\overline{\C_{+}}$ and
\begin{equation}\label{eq:Kreiss condition}
\|(\lambda+A)^{-1}\|\leq \frac{K}{\Real(\lambda)}\qquad (\lambda\in\C_{+})
\end{equation}
for some $K\geq0$. It is known from \cite{vDoKrSp93} that \eqref{eq:Kreiss condition} implies $\|T(t)\|\leq enK$ if $X$ is $n$-dimensional. Moreover, as was shown in \cite{Eisner-Zwart06}, if $X$ is a Hilbert space and \eqref{eq:Kreiss condition} holds then $\|T(t)\|$ grows at most linearly in $t$, while there exist semigroups on general Banach spaces which satisfy \eqref{eq:Kreiss condition} but grow exponentially. For more on this topic see \cite{vDoKrSp93, Eisner-Zwart06, Strikwerda-Wade97} and references therein.

There are many interesting strongly continuous semigroups with a polynomial growth rate. One important class is given by certain Schr\"odinger semigroups on $L^{p}$-spaces, $p\in[1,\infty]$, that have generator $\Delta+V$ for $V$ an (unbounded) potential (see \cite{DaSi, Grig} and references therein). Other examples arise from (perturbed) wave equations \cite{GoldW,Paunonen14}, delay equations \cite{Sklyar-Polak17}, and operator matrices and multiplication operators \cite[Section 4.7]{Rozendaal-Veraar18b}. In \cite{ArBaHiNe11,Daviesbook,Eisner10,Engel-Nagel00,vanNeerven96b} and \cite{Dav05} one may find additional examples of semigroups with interesting growth behavior.

The following is the main result of this article. It enables one to derive polynomial growth bounds for a semigroup from resolvent estimates similar to \eqref{eq:Kreiss condition}. We note that each eventually differentiable $C_{0}$-semigroup, and in particular each analytic semigroup, is asymptotically analytic. Also, condition \eqref{it:mainC} is satisfied if e.g.~$X=C_{ub}(\Omega)$ for $\Omega$ a metric space, or $X=C_{0}(\Omega)$ for $\Omega$ a locally compact space.

\begin{theorem}\label{thm:main}
Let $-A$ be the generator of a $C_{0}$-semigroup $(T(t))_{t\geq0}$ on a Banach space $X$ such that $\C_-\subseteq \rho(A)$. Assume that one of the following conditions holds:
\begin{enumerate}
\item $X$ is a Hilbert space;
\item $(T(t))_{t\geq0}$ is an asymptotically analytic semigroup;
\item $X = L^p(\Omega)$ for $p\in [1, \infty)$ and $\Omega$ a measure space, and $T(t)$ is a positive operator for all $t\geq0$.
\item\label{it:mainC} $X$ is a closed subspace of $C_{b}(\Omega)$, for $\Omega$ a topological space, such that either $\mathbf{1}_{\Omega}\in X$ or $X$ is a sublattice, and $T(t)$ is a positive operator for all $t\geq0$.
\end{enumerate}
If there exist $\alpha\in [0, \infty)$ and $K\geq 1$ such that
\begin{equation}\label{eq:R}
\|(\lambda + A)^{-1}\|_{\La(X)}\leq K (\Real(\lambda)^{-\alpha} +1)\qquad (\lambda\in\C_+),
\end{equation}
then there exists a $C\geq0$ such that
\begin{equation}\label{eq:T}
\|T(t)\|_{\La(X)} \leq  CK (t^{\alpha} + 1)\qquad (t\geq0).
\end{equation}
\end{theorem}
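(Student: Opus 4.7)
My plan is to derive \eqref{eq:T} from \eqref{eq:R} by a Fourier-multiplier argument applied to an inverse Laplace representation of $T(t)$. The four hypotheses (1)--(4) on $X$ will each select a different operator-valued multiplier theorem that converts the pointwise bound \eqref{eq:R} on the resolvent symbol into a bound on the associated Fourier multiplier. Since $\C_-\subseteq\rho(A)$, the resolvent is holomorphic on an open neighborhood of every line $\eps+\ui\R$ with $\eps>0$, and for $x\in\dom(A^{k})$ with $k$ sufficiently large one has the representation
\[
T(t)x=\frac{1}{2\pi}\int_{\R}\ue^{(\eps+\ui s)t}(\eps+\ui s+A)^{-1}x\ud s,
\]
valid after enough integrations by parts in $s$ to ensure absolute convergence. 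The natural specialization $\eps:=1/t$ makes $\ue^{\eps t}=\ue$ a harmless constant and turns \eqref{eq:R} into the uniform bound $\|m(s)\|_{\La(X)}\le K(t^{\alpha}+1)$ for the symbol $m(s):=(\eps+\ui s+A)^{-1}$.

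The rest of the argument splits into four cases according to which multiplier theorem is applicable. In case (1), Plancherel gives $\|T_{m}\|_{\La(\Elltwo(\R;X))}=\sup_{s}\|m(s)\|_{\La(X)}$, which is exactly the pointwise bound, so \eqref{eq:T} follows immediately after a time localization. In case (2), asymptotic analyticity supplies extra decay of the resolvent in a region of the form $\{\Real\lambda\le 0,\ |\Imag\lambda|\ge R\}$; I would interpolate this with \eqref{eq:R} on $\eps+\ui\R$ by a Marcinkiewicz-type estimate to produce an $X$-valued multiplier bound without any geometric hypothesis on $X$. In cases (3) and (4), positivity is the main tool: the modulus $|T_{m}|$ dominates $T_{m}$, and for a positive operator on $L^{p}(\Omega)$ or on a sublattice of $C_{b}(\Omega)$ the operator norm is controlled by its action on $\ind_{\Omega}$ (or on a positive surrogate when $\ind_{\Omega}\notin X$), to which the scalar version of \eqref{eq:R} applies directly. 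Density of $\dom(A^{k})$ in $X$ then extends \eqref{eq:T} to every $x\in X$.

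The main obstacle is expected to be case (2): asymptotic analyticity is strictly weaker than analyticity, so only an averaged Marcinkiewicz-type control on derivatives of $m$ is available, and this has to be combined with \eqref{eq:R} carefully to preserve the sharp power $t^{\alpha}$. Cases (3) and (4) demand a precise formulation of the positivity reduction; in particular, the sublattice hypothesis in (4) is exactly what makes the reduction to a positive surrogate available when $\ind_{\Omega}\notin X$. Case (1) should be essentially immediate from Plancherel, provided the integration-by-parts setup at the outset is arranged so that the integrand remains absolutely integrable uniformly in $t$.
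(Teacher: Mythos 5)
There is a genuine gap, and it sits at the heart of the argument: your closing claim that ``density of $\dom(A^{k})$ in $X$ then extends \eqref{eq:T} to every $x\in X$'' is false. The inverse-Laplace representation with integrations by parts only converges absolutely after you trade decay in $s$ for regularity of $x$, so what it yields is an estimate of the form $\|T(t)x\|_{X}\leq C e^{\eps t}g(1/\eps)\|x\|_{\dom(A^{k})}$, i.e.\ a bound on $\|T(t)\|_{\La(X_{\gamma},X)}$. This is exactly the content of Proposition \ref{prop:polynomialgrowthgeneral}, which holds on \emph{every} Banach space and is strictly weaker than \eqref{eq:T}; a bound in the graph norm does not upgrade to a bound in the $X$-norm by density. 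The missing mechanism is the one supplied by Theorem \ref{thm:abstract polynomial growth}: one applies the multiplier $I+\w(a+\ui\cdot+A)^{-1}$ not to smooth vectors but to the orbit $f(s)=\ue^{-(\w+a)s}T(s)x$, whose $L^{p}(\R;X)$-norm is controlled by $\|x\|_{X}$ alone; the resolvent identity gives $T_{I+\w m_{a}}(f)(t)=\ue^{-at}T(t)x$, and the $L^{q}$-in-time multiplier bound is upgraded to a pointwise-in-time bound using only the local bound $\sup_{\tau\in[0,2]}\|T(\tau)\|$ on windows of length two, before setting $a=1/t$. Without this bootstrap (or an equivalent), the Plancherel bound in case (1) and the positivity bounds in cases (3)--(4) give you multiplier norms but never a bound on $\|T(t)\|_{\La(X)}$.

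Two further points would still need repair even with that mechanism in place. In case (3) with $p<\infty$ the norm of a positive operator on $L^{p}(\Omega)$ is not controlled by its action on $\ind_{\Omega}$ (which in general is not even in $L^{p}$); the paper instead proves a positive-kernel multiplier theorem (Proposition \ref{prop:positivekernel}, via a transference-type result for $p<\infty$ and the $\ind_{\Omega}$/sublattice argument only when $p=\infty$), giving $\|m\|_{\Ma_{p,p}}=\|m(0)\|_{\La(X)}=\|(a+A)^{-1}\|_{\La(X)}$, and moreover must first establish $\ue^{-at}T(\cdot)x\in L^{1}([0,\infty);X)$ (done in Theorem \ref{thm:positivepolygrowth} via the regularization $B_{n}=n^{2}(n+A)^{-2}$) before the Fourier transform identity for the resolvent may be used. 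In case (2) the proposed ``Marcinkiewicz-type interpolation'' is not what is needed: the paper uses the Batty--Srivastava results to get a uniform $\Ma_{1,\infty}$-bound for $(1-\psi(\cdot))(a+\ui\cdot+A)^{-1}$ away from a compact frequency set, and a direct $L^{1}(\R;\La(X))$-estimate of $\psi(\cdot)(a+\ui\cdot+A)^{-1}$ on that set, and then again feeds this into Theorem \ref{thm:abstract polynomial growth} (Remark \ref{rem:large t}).
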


In fact, in the main text we allow an arbitrary growth rate $g$ in \eqref{eq:R} and \eqref{eq:T}.
It follows from Example \ref{ex:Hilbert} below that, for $\alpha\in\N$, Theorem \ref{thm:main} is optimal up to arbitrarily small polynomial loss in \eqref{eq:T}.

For $\alpha = 0$ and $X$ a Hilbert space, Theorem \ref{thm:main} reduces to the Gearhart-Pr\"uss theorem (see \cite[Theorem 5.2.1]{ArBaHiNe11}), while for $\alpha=0$ and $(T(t))_{t\geq0}$ a positive semigroup on an $L^{p}$-space one recovers a result by Weis (see \cite[Theorem 5.3.1]{ArBaHiNe11}).

For $\alpha\in (0,1)$ the inequality $\|R(\lambda, A)\|\geq \text{dist}(\lambda, \sigma(A))$ for $\lambda\in \rho(A)$ shows that $\overline{\C_-}\subseteq \rho(A)$, and then one can use a Neumann series argument to reduce to the case where $\alpha=0$.

For $\alpha\geq 1$ it was previously known from \cite{Eisner-Zwart07} that \eqref{eq:R} implies
\begin{equation}\label{eq:eisnerzwart}
\|T(t)\|_{\La(X)}\leq C K(t^{2\alpha-1}+1)\qquad (t\geq0)
\end{equation}
whenever $(T(t))_{t\geq0}$ has a so-called \emph{$p$-integrable resolvent} for some $p\in(1,\infty)$. This property is satisfied by e.g.~all $C_{0}$-semigroups on Hilbert spaces and analytic semigroups on general Banach spaces. If $\alpha=1$ then \eqref{eq:T} and \eqref{eq:eisnerzwart} yield the same conclusion. In all other cases \eqref{eq:T} improves \eqref{eq:eisnerzwart}. Theorem \ref{thm:main} also seems to be the first result of its kind for asymptotically analytic semigroups and for positive semigroups on $L^{p}$-spaces and spaces of continuous functions. Generation theorems for (semi)groups with polynomial growth were discussed in \cite{Eisner10, Kis, Mal01}. In contrast to these articles we assume a priori that the relevant semigroup exists. Other results on semigroups of polynomial growth can be found in \cite{Boukdir15, Eisner-Zwart06, vanNeerven09}. Versions of Theorem \ref{thm:main} for C\'esaro type averages have been considered in \cite{LiSaSh08}, where also numerous counterexamples are presented.

It was known from \cite{Eisner-Zwart07} that on general Banach spaces \eqref{eq:T} implies
\[
\|(\lambda + A)^{-1}\|_{\La(X)}\leq C'(\Real(\lambda)^{-\alpha-1} + 1 )\qquad (\lambda\in \C_+)
\]
for some $C'\geq0$, thus providing a partial converse to Theorem \ref{thm:main}. In Theorem \ref{thm:abstractconverse} and Corollary \ref{cor:abstractchar} we extend this result and obtain a full characterization of polynomial stability of a semigroup in terms of properties of the resolvent of its generator.

We also derive versions of Theorem \ref{thm:main} on fractional domains, where we make other geometric assumptions on $X$. In particular, it is shown in Proposition \ref{prop:polynomialgrowthgeneral} that on a general Banach space $X$ \eqref{eq:Kreiss condition} implies at most linear growth for semigroup orbits with sufficiently smooth initial values. We also point out that, by choosing $\alpha=0$ and using a scaling argument, Theorem \ref{thm:main} and other results in Section \ref{sec:growth} imply various theorems about exponential stability from \cite{vanNeerven96a,Weis97,vanNeerven09,Weis-Wrobel96}.

We note here that the main result of \cite{Eisner-Zwart06} was applied to Schr\"odinger semigroups in \cite[Theorem 5.4]{Faupin-Frohlich17} to deduce cubic growth of the semigroup, whereas Theorem \ref{thm:main} immediately yields quadratic growth.

To prove Theorem \ref{thm:main} we use the connection between stability theory and Fourier multipliers which goes back to e.g.~\cite{Kaashoek-VerduynLunel94,Hieber99,Weis97,Latushkin-Shvydkoy01} and which was renewed in \cite{Rozendaal-Veraar18b}, following the development of a theory of operator-valued $(L^{p},L^{q})$ Fourier multipliers in \cite{Rozendaal-Veraar17,Rozendaal-Veraar18}. In particular, Theorem \ref{thm:abstract polynomial growth} gives a Fourier multiplier criterion for a bound as in \eqref{eq:T} to hold, and Corollary \ref{cor:abstractchar} gives a characterization of polynomial growth and uniform boundedness of a semigroup in terms of multiplier properties of the resolvent. Theorem \ref{thm:main} is then deduced using Plancherel's theorem, known connections between Fourier multipliers and analytic semigroups from \cite{Batty-Srivastava03}, and a Fourier multiplier theorem for positive kernels from Proposition \ref{prop:positivekernel}.

In Section \ref{sec:exRen} we apply Theorem \ref{thm:main} to obtain optimality of the growth rate in a perturbed wave equation which was studied by Renardy in \cite{Renardy94} and which exhibits unusual spectral behavior.

\section{Notation and preliminaries}

We denote by $\C_{+}:=\{\lambda\in\C\mid \Real(\lambda)>0\}$ and $\C_{-}:=-\C_{+}$ the open complex right and left half-planes.

Nonzero Banach spaces over the complex numbers are denoted by $X$ and $Y$. The space of bounded linear operators from $X$ to $Y$ is $\La(X,Y)$, and $\La(X):=\La(X,X)$. The identity operator on $X$ is denoted by $\I_{X}$, and we usually write $\lambda$ for $\lambda\I_{X}$ when $\lambda\in\C$.
The domain of a closed operator $A$ on $X$ is $\D(A)$, a Banach space with the norm
\begin{align*}
\norm{x}_{\D(A)}:=\norm{x}_{X}+\norm{Ax}_{X}\qquad (x\in \D(A)).
\end{align*}
The spectrum of $A$ is $\sigma(A)$ and the resolvent set is $\rho(A)=\C\setminus \sigma(A)$. We write $R(\lambda,A)=(\lambda -A)^{-1}$ for the resolvent operator of $A$ at $\lambda\in\rho(A)$.

For $p\in[1,\infty]$ and $\Omega$ a measure space, $\Ellp(\Omega;X)$ is the Bochner space of equivalence classes of strongly measurable, $p$-integrable, $X$-valued functions on $\Omega$.
The H\"{o}lder conjugate of $p\in[1,\infty]$ is $p'\in[1,\infty]$ and is defined by $1=\frac{1}{p}+\frac{1}{p'}$.

The indicator function of a set $\Omega$ is denoted by $\mathbf{1}_{\Omega}$. We often identify functions on $[0,\infty)$ with their extension to $\R$ which is identically zero on $(-\infty,0)$.

The class of $X$-valued Schwartz functions on $\Rn$, $n\in\N$, is denoted by $\Sw(\Rn;X)$, and $\Sw(\Rn):=\Sw(\Rn;\C)$. The space of continuous linear $f:\Sw(\Rn)\to X$, the $X$-valued tempered distributions, is $\Sw'(\Rn;X)$. The Fourier transform of $f\in\Sw'(\Rn;X)$ is denoted by $\F f$ or $\widehat{f}$. If $f\in\Ell^{1}(\Rn;X)$ then
\begin{align*}
\F f(\xi)=\wh{f}(\xi)=\int_{\Rn}\ue^{-\ui \xi\cdot t}f(t)\ud t\qquad (\xi\in\Rn).
\end{align*}

Let $X$ and $Y$ be Banach spaces. A function $m:\Rn\to\La(X,Y)$ is \emph{$X$-strongly measurable} if $\xi\mapsto m(\xi)x$ is a strongly measurable $Y$-valued map for all $x\in X$. We say that $m$ is \emph{of moderate growth} if there exist $\alpha\in(0,\infty)$ and $g\in \Ell^1(\R)$ such that
\[
(1+\abs{\xi})^{-\alpha} \|m(\xi)\|_{\La(X,Y)} \leq g(\xi) \qquad(\xi\in\Rn).
\]
Let $m:\Rn\to\La(X,Y)$ be an $X$-strongly measurable map of moderate growth. Then $T_{m}:\Sw(\Rn;X)\to\Sw'(\Rn;Y)$,
\begin{equation}\label{eq:Fouriermultiplier}
T_{m}(f):=\F^{-1}(m\cdot\widehat{f}\,)\qquad(f\in\Sw(\Rn;X)),
\end{equation}
is the \emph{Fourier multiplier operator} associated with $m$. For $p\in[1,\infty)$ and $q\in[1,\infty]$ we let $\Ma_{p,q}(\Rn;\La(X,Y))$ be the set of all $X$-strongly measurable $m:\Rn\to\La(X,Y)$ of moderate growth such that $T_{m}\in\La(L^p(\Rn;X),L^{q}(\Rn;Y))$, with
\[
\|m\|_{\Ma_{p,q}(\Rn;\La(X,Y))}:=\|T_{m}\|_{\La(L^{p}(\Rn;X),L^{q}(\Rn;Y))}.
\]
Moreover, suppose that there exists an $X$-strongly measurable $K:\Rn\to\La(X,Y)$ such that $K(\cdot)x\in L^{1}(\Rn;Y)$ and $m(\xi)x=\F(K(\cdot)x)(\xi)$ for all $x\in X$ and $\xi\in\Rn$. Then for $f\in L^{\infty}(\Rn)\otimes X$ an $X$-valued simple function one may define
\[
T_{m}(f)(t):=\int_{\Rn}K(t-s)f(s)\ud s \qquad(t\in\Rn).
\]
We write $m\in \Ma_{\infty,\infty}(\Rn;\La(Y,X))$ if there exists a constant $C\geq 0$ such that
\begin{equation}\label{eq:inftymultiplier}
\|T_{m}(f)\|_{L^{\infty}(\Rn;Y)}\leq C\|f\|_{L^{\infty}(\Rn;X)}
\end{equation}
for all such $f$, and then we let $\|m\|_{\Ma_{\infty,\infty}(\Rn;\La(X,Y))}$ be the minimal constant $C$ in \eqref{eq:inftymultiplier}. In this case $T_{m}$ extends to a bounded operator from the closure of the $X$-valued simple functions in $L^{\infty}(\Rn;X)$ to $L^{\infty}(\Rn;Y)$. This closure is not in general equal to $L^{\infty}(\Rn;X)$, but for $n=1$ it contains all regulated functions (e.g.\ piecewise continuous $f$) that vanish at infinity
(see \cite[7.6.1]{Dieu69}), which will suffice for our purposes.

For $\ph\in(0,\pi)$ set
\[
S_{\ph}:=\{z\in \C\setminus\{0\}\mid \abs{\arg(z)}<\ph\}.
\]
A operator $A$ on a Banach space $X$ is \emph{sectorial of angle} $\ph\in(0,\pi)$ if $\sigma(A)\subseteq \overline{S_{\ph}}$ and if $\sup\{\|\lambda R(\lambda,A)\|_{\La(X)}\mid \lambda\in \C\setminus\overline{S_{\theta}}\}<\infty$ for all $\theta\in(\ph,\pi)$. An operator $A$ such that
\[
M(A):=\sup\{\|\lambda(\lambda+A)^{-1}\|_{\La(X)}\mid \lambda\in(0,\infty)\}<\infty
\]
is sectorial of angle $\ph=\pi-\arcsin(1/M(A))$, and for each $\theta>\pi-\arcsin(1/M(A))$ there exists a constant $C_{\theta}\geq0$ independent of $A$ such that
\begin{equation}\label{eq:sectorial}
\sup\{\|\lambda R(\lambda,A)\|_{\La(X)}\mid \lambda\in \C\setminus\overline{S_{\theta}}\}\leq C_{\theta}M(A),
\end{equation}
as follows from the proof of \cite[Proposition 2.1.1.a]{Haase06a}. For $-A$ the generator of a $C_{0}$-semigroup $(T(t))_{t\geq0}\subseteq\La(X)$ on a Banach space $X$, set
\begin{align*}
\w_{0}(T):=\inf\{\w\in\R\mid \exists M\geq 0: \|T(t)\|_{\La(X)}\leq M\ue^{\w t}\text{ for all }t\geq0\}
\end{align*}
and $s(-A):=\sup\{\Real(\lambda)\mid\lambda\in\sigma(-A)\}$. Then $\w+A$ is a sectorial operator for $\w>\w_{0}(T)$. In particular, for $\gamma\in[0,\infty)$ the fractional domain $X_{\gamma} := D((\omega+A)^{\gamma})$ is well defined, and up to norm equivalence it is independent of the choice of $\w$. For background knowledge on $C_{0}$-semigroups and sectorial operators we refer to \cite{ArBaHiNe11,Eisner10,Haase06a,Engel-Nagel00,vanNeerven96b}.

\section{Polynomial growth results}\label{sec:growth}

Throughout this section, for $-A$ the generator of a $C_0$-semigroup $(T(t))_{t\geq0}$ on a Banach space $X$, let $\omega, M_{\omega}\geq1$ be such that
\begin{equation}\label{eq:assTM}
\|T(t)\|_{\calL(X)} \leq M_{\omega} e^{t(\omega-1)} \qquad(t\geq0),
\end{equation}
and set $M := \sup\{\|T(t)\|_{\calL(X)}\mid t\in [0,2]\}$.

\subsection{General Banach spaces}\label{subsec:growthmultipliers}

We first consider semigroups on general Banach spaces. In \cite{Eisner-Zwart07} an example is given of a semigroup generator $-A$ which satisfies \eqref{eq:Kreiss condition} such that the associated semigroup grows exponentially. The following proposition shows in particular that the Kreiss condition does imply at most linear growth of semigroup orbits with sufficiently smooth initial values.

\begin{proposition}\label{prop:polynomialgrowthgeneral}
Let $-A$ be the generator of a $C_{0}$-semigroup $(T(t))_{t\geq0}$ on a Banach space $X$ such that $\C_-\subseteq\rho(A)$. Suppose that there exists a nondecreasing $g:(0,\infty)\to (0,\infty)$ such that
\[
\|(\lambda+ A)^{-1}\|_{\La(X)} \leq g(\Real(\lambda)^{-1})\qquad (\lambda\in\C_{+}).
\]
Then for each $\gamma\in(1,\infty)$ there exists a $C_{\gamma}>0$ such that $\|T(t)\|_{\La(X_{\gamma},X)}\leq C_{\gamma}g(t)+M$ for all $t>0$.
\end{proposition}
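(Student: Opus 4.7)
The plan combines an inverse Laplace (Bromwich) representation with a key cancellation via Jordan's lemma. For $t \in [0, 2]$, the continuous embedding $X_\gamma \hookrightarrow X$ combined with $\|T(t)\|_{\La(X)} \leq M$ gives $\|T(t) x\|_X \leq M\|x\|_{X_\gamma}$, accounting for the additive $M$. So I restrict to $t$ large enough that $\sigma := 1/t \leq \omega/2$, and for $x \in X_\gamma$ set $y := (\omega + A)^\gamma x$ with $\|y\|_X \lesssim \|x\|_{X_\gamma}$.

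The starting point is the Bromwich representation
\[
T(t) x = \frac{1}{2\pi\ui}\int_{\sigma_0 - \ui\infty}^{\sigma_0 + \ui\infty} e^{\lambda t}(\lambda + A)^{-1}(\omega + A)^{-\gamma} y \ud\lambda,
\]
valid for $\sigma_0 > \omega_0(T)$, shifted leftward to $\Real\lambda = \sigma$ via Cauchy's theorem. The integrand is holomorphic in $\{0 < \Real\lambda \leq \sigma_0\}$ by the resolvent bound on $\C_+$, and horizontal connectors vanish as their height tends to infinity thanks to the decay of $(\lambda + A)^{-1}(\omega + A)^{-\gamma}$ in $|\Imag\lambda|$.

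For integer $\gamma = n \geq 2$, iterating the resolvent identity $(\lambda + A)^{-1}(\omega + A)^{-1} = [(\omega + A)^{-1} - (\lambda + A)^{-1}]/(\lambda - \omega)$ yields
\[
(\lambda + A)^{-1}(\omega + A)^{-n} = \sum_{k=1}^{n} \frac{(-1)^{k-1}(\omega + A)^{-(n-k+1)}}{(\lambda - \omega)^k} + \frac{(-1)^n (\lambda + A)^{-1}}{(\lambda - \omega)^n}.
\]
Each term in the sum is a fixed bounded operator times a meromorphic scalar whose only pole lies at $\omega > \sigma$, and closing the scalar contour in $\C_-$ via Jordan's lemma kills its contribution to the Bromwich integral for $t > 0$. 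Only the final term survives. Substituting $\sigma = 1/t$ (so that $e^{\sigma t} = e$), invoking the resolvent bound, and using $|\lambda - \omega| \gtrsim \omega + |\Imag\lambda|$ gives
\[
\|T(t) x\|_X \leq \frac{e}{2\pi} \|y\|_X \int_{\R} \frac{g(t)}{|\sigma + \ui\tau - \omega|^n}\ud\tau \leq \frac{C_n\,g(t)}{\omega^{n-1}}\|x\|_{X_n},
\]
with the $\tau$-integral finite precisely because $n > 1$.

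For fractional $\gamma > 1$, write $\gamma = m + \beta$ with $m \in \N$ and $\beta \in [0, 1)$, and combine the integer-case decomposition applied to $(\omega + A)^{-m}$ with the Balakrishnan representation $(\omega + A)^{-\beta} = \frac{\sin(\pi\beta)}{\pi}\int_0^\infty r^{-\beta}(r + \omega + A)^{-1}\ud r$ together with the auxiliary identity $(\lambda + A)^{-1}(r + \omega + A)^{-1} = [(r + \omega + A)^{-1} - (\lambda + A)^{-1}]/(\lambda - r - \omega)$. After partial fractions in $\lambda$ on products of $(\lambda - \omega)^{-j}$- and $(\lambda - r - \omega)^{-1}$-type factors, the purely scalar pieces again vanish by Jordan's lemma (their poles at $\omega$ and $r + \omega$ sit to the right of $\sigma$), leaving a residual integrand absolutely integrable in $\tau$; integration against $r^{-\beta}\ud r$ converges thanks to the standard identity $\int_0^\infty r^{-\beta - 1}\log(1 + r/\omega)\ud r = \pi \omega^{-\beta}/(\beta \sin(\pi\beta))$, yielding the same shape of bound $C_\gamma g(t)/\omega^{\gamma - 1}$.

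\textbf{Main obstacle.} The delicate step is performing the decomposition of $(\lambda + A)^{-1}(\omega + A)^{-\gamma}$ so that the $\lambda$-scalar pieces are cleanly separated from the residual term containing $(\lambda + A)^{-1}$: only then can one rely on Jordan's lemma to eliminate the scalars and retain a single factor of $g(1/\Real\lambda)$, rather than the exponent $\lceil \gamma \rceil$ that a naive triangle-inequality bound on iterated resolvent identities would force. For integer $\gamma$ this is automatic, but the fractional case additionally requires the Balakrishnan representation and a careful double integral in $r$ and $\tau$.
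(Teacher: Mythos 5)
Your argument is correct and it arrives at the same estimate, but it reconstructs by hand what the paper obtains in one line from the half-plane functional calculus of \cite{BaHaMu13}: the paper writes
$e^{-at}T(t)x=\frac{1}{2\pi \ui}\int_{\ui\R}\frac{e^{-zt}}{(1-a+z)^{\gamma}}R(z,A+a)(1+A)^{\gamma}x\ud z$,
which is the same inverse-Laplace representation on the vertical line at distance $a=1/t$ from the imaginary axis, except that the regularizer $(1-a+z)^{-\gamma}$ is supplied directly by the calculus, so the integral is absolutely convergent for every real $\gamma>1$, the bound $g(1/a)$ is pulled out, and $a=1/t$ is set --- no contour shifting, no integer/fractional case split, no Jordan's lemma cancellation. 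You instead start from the classical Bromwich inversion (valid since $X_{\gamma}\subseteq D(A)$ for $\gamma>1$), shift the contour to $\Real\lambda=1/t$, and manufacture the regularization through the iterated resolvent identity (integer $\gamma$), respectively the Balakrishnan formula plus partial fractions (fractional $\gamma$), discarding the scalar pieces whose poles at $\omega$ and $r+\omega$ lie to the right of the contour; in effect you give a self-contained proof of the reproduction formula that the cited calculus provides. What this buys is elementarity (only Laplace inversion, the resolvent identity, and standard contour arguments); what it costs is bookkeeping: for fractional $\gamma=m+\beta$ you still owe the general partial-fraction decomposition and the justification of interchanging the Balakrishnan $r$-integral with the line integral --- naive Fubini fails on the individual scalar pieces, whose $\tau$-integrals diverge logarithmically, so one should integrate in $\lambda$ first over truncated contours and pass to the limit by dominated convergence in $r$, which works because the operator coefficients decay like $\min(1,1/r)$. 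These are precisely the technicalities the paper's appeal to \cite{BaHaMu13} avoids; your log-integral identity, the resulting bound of the form $C_{\gamma}g(t)\omega^{-(\gamma-1)}$, and the treatment of small $t$ via $M$ all check out.
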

\begin{proof}
It suffices to prove the estimate for $t\geq 2$. Let $x\in X_{\gamma}$ and set $y:=(1+A)^{\gamma}x\in X$. For $a\in (0,1)$ the functional calculus for half-plane operators from \cite{BaHaMu13} yields
\[
e^{-at}T(t)x=\frac{1}{2\pi \ui}\int_{i\R}\frac{e^{-zt}}{(1-a+z)^{\gamma}}R(z,A+a)y\ud z.
\]
Hence there exists a constant $C'_{\gamma}>0$ such that, for all $a\in (0,\tfrac12)$, 
\[
\|T(t)x\|_{X}\leq \frac{1}{2\pi}e^{at}g(1/a)\|y\|_{X}\!\int_{\ui\R}\frac{1}{|1-a+z|^{\gamma}}|\text{d}z|\leq C'_{\gamma}e^{at}g(1/a)\|x\|_{X_{\gamma}}.
\]
Now set $a=1/t$ to conclude the proof.
\end{proof}

The following theorem is inspired by \cite[Theorem 3.1]{Latushkin-Shvydkoy01}. It links growth rates of a semigroup to the Fourier multiplier properties of the resolvent of its generator.

\begin{theorem}\label{thm:abstract polynomial growth}
Let $-A$ be the generator of a $C_{0}$-semigroup $(T(t))_{t\geq0}$ on a Banach space $X$ such that $\C_{-}\subseteq\rho(A)$, and let $Y\hookrightarrow X$ be a continuously embedded Banach space satisfying the following conditions:
\begin{enumerate}[(1)]
\item\label{it:condabstract polynomial growth1} There exists a $C_{T}\geq0$ such that $T(t)\in\La(Y)$ for all $t\geq0$, with $\|T(t)\|_{\calL(Y)}\leq C_{T} \|T(t)\|_{\calL(X)}$;
\item\label{it:condabstract polynomial growth2} There exists a continuously and densely embedded Banach space $Y_0\hookrightarrow Y$ such that $[t\mapsto e^{-a t} \|T(t)\|_{\La(Y_0, X)}]\in L^1(0,\infty)$ for all $a\in(0,\infty)$.
\end{enumerate}
Suppose that there exist $p\in[1,\infty]$, $q\in[p,\infty]$ and a nondecreasing $g:(0, \infty)\to (0,\infty)$ such that $(a+\ui\cdot+A)^{-1}\in \Ma_{p,q}(\R;\La(Y,X))$ for all $a\in(0,\infty)$, with
\begin{equation}\label{eq:pq norm}
\|(a+\ui\cdot + A)^{-1}\|_{\Ma_{p,q}(\R;\calL(Y,X))}\leq g(1/a).
\end{equation}
Then $\|T(t)\|_{\calL(Y,X)} \leq C_{q}(g(t)+1)$ for all $t>0$. Here $C_{q} = e C_T C_{Y} M_{\omega} (1+2M\omega)$ for $q<\infty$, $C_{\infty}=eC_T C_Y M_{\omega} (1+ \omega)$, and $C_{Y}=\max(1,\|I_{Y}\|_{\La(Y,X)})$.
\end{theorem}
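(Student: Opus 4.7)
The plan is to realize the Fourier multiplier operator $T_{m_a}$ with symbol $m_a(\xi):=(a+\ui\xi+A)^{-1}$ as convolution against the $\La(X)$-valued kernel $k_a(s):=e^{-as}T(s)\ind_{[0,\infty)}(s)$. For $a>\w$ this is the standard Laplace representation $(a+\ui\xi+A)^{-1}=\widehat{k_a}(\xi)=\int_0^\infty e^{-(a+\ui\xi)s}T(s)\,ds$, which yields
\[
T_{m_a}(f)(\tau)=\int_0^\infty e^{-as}T(s)f(\tau-s)\,ds.
\]
Condition~\eqref{it:condabstract polynomial growth2} guarantees that $s\mapsto e^{-as}\|T(s)\|_{\La(Y_0,X)}$ lies in $L^1(0,\infty)$ for every $a>0$, and combined with the hypothesis~\eqref{eq:pq norm} and Condition~\eqref{it:condabstract polynomial growth1} this extends the convolution representation of $T_{m_a}$ to all $f\in L^p(\R;Y)$ and all $a>0$.

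Fix $t>0$ and set $a:=1/t$. For $x\in Y_0$, test against $f(s):=\ind_{[0,1]}(s)x$, which satisfies $\|f\|_{L^p(\R;Y)}=\|x\|_Y$. Using the semigroup law $T(s)=T(\tau-1)T(s-\tau+1)$ on $s\in[\tau-1,\tau]$, a direct calculation for $\tau\ge1$ yields
\[
T_{m_a}(f)(\tau)=e^{-a(\tau-1)}T(\tau-1)y_a(x),\qquad y_a(x):=\int_0^1 e^{-au}T(u)x\,du.
\]
The multiplier bound~\eqref{eq:pq norm} gives $\|T_{m_a}(f)\|_{L^q(\R;X)}\le g(t)\|x\|_Y$. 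When $q=\infty$ this is already pointwise and yields $\|T(t)y_a(x)\|_X\le eg(t)\|x\|_Y$. When $q<\infty$ one uses the elementary estimate $\|T(t)z\|_X\le M\|T(\rho)z\|_X$ for $\rho\in[t-1,t]$ (from $T(t)=T(t-\rho)T(\rho)$ with $\|T(t-\rho)\|_{\La(X)}\le M$), integrates over $\rho\in[t-1,t]$, and combines with $e^{-aq\rho}\ge e^{-aqt}$ on that interval to convert the $L^q$-estimate into the pointwise bound $\|T(t)y_a(x)\|_X\le eMg(t)\|x\|_Y$.

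The remaining and most delicate step is to convert the bound on $T(t)y_a(x)$ into a bound on $T(t)x$ itself. The bridge is the identity
\[
(a+A)y_a(x)=x-e^{-a}T(1)x,
\]
obtained by integrating $\partial_u[e^{-au}T(u)x]=-(a+A)e^{-au}T(u)x$ over $[0,1]$. This shows $y_a(x)\in D(A)$ and $Ay_a(x)=x-e^{-a}T(1)x-ay_a(x)\in Y$, with $\|Ay_a(x)\|_Y\lesssim(1+C_TM)\|x\|_Y$ by Condition~\eqref{it:condabstract polynomial growth1}. Applying $T(t)$ then yields
\[
T(t)x=e^{-a}T(t+1)x+aT(t)y_a(x)+T(t)Ay_a(x),
\]
and reapplying the pointwise estimate of the previous paragraph to $Ay_a(x)\in Y$ controls the last two summands by a constant multiple of $g(t)\|x\|_Y$. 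Iterating the resulting relation finitely many times on the shifts $t\mapsto t+1$, with the a~priori exponential bound~\eqref{eq:assTM} absorbing the tail $e^{-na}\|T(t+n)x\|_X$, produces the stated inequality. A density argument based on Condition~\eqref{it:condabstract polynomial growth2} extends the bound from $Y_0$ to $Y$. The main technical hurdle is precisely this bookkeeping: organising the iteration so that the accumulated constants combine into the explicit form $C_q=eC_TC_YM_\w(1+2M\w)$ (respectively $C_\infty=eC_TC_YM_\w(1+\w)$) stated in the theorem.
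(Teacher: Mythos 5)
The first half of your argument is fine and close in spirit to the paper: representing $T_{m_a}$ (with $m_a(\xi)=(a+\ui\xi+A)^{-1}$) as convolution with $e^{-a\cdot}T(\cdot)\ind_{[0,\infty)}$ on $Y_0$-valued data, and upgrading the $L^q$-estimate to a pointwise bound by exploiting the semigroup law on unit intervals, is essentially the device the paper uses to pass from $\Ma_{p,q}$ to $\Ma_{p,\infty}$. The gap is exactly in the step you call delicate, and it is not bookkeeping: it is circular. From $T(t)x=e^{-a}T(t+1)x+aT(t)y_a(x)+T(t)Ay_a(x)$, the estimate you have established controls only vectors of the form $T(t)y_a(w)$, not $T(t)w$ for a general $w\in Y$; so bounding the summand $T(t)Ay_a(x)$ by $g(t)\|Ay_a(x)\|_Y$ would require precisely the inequality $\|T(t)\|_{\La(Y,X)}\lesssim g(t)+1$ that is being proved. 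Substituting the identity into itself does not help: after $n$ steps the genuine remainder is $T(t)(Ay_a)^nx$, which carries no decaying factor, while $\|(Ay_a)^nx\|_Y$ can only be bounded geometrically with ratio at least $1+c\,C_TM>1$; and the shifted terms gain only factors $e^{-ka}=e^{-k/t}$, which the a priori bound \eqref{eq:assTM} does not absorb since $a=1/t$ is in general smaller than $\w-1$. Stopping after finitely many iterations therefore leaves an uncontrolled remainder, and letting the number of iterations grow makes things worse. (Smaller issues: the convolution representation for general $f\in L^p(\R;Y)$, and the claim $y_a(x)\in Y$ with $\|Ay_a(x)\|_Y\lesssim\|x\|_Y$, need $Y$-valued measurability/integrability of $T(\cdot)x$, which condition (1) alone does not provide; and your $q<\infty$ averaging over $\rho\in[t-1,t]$ presumes $t\geq1$.)

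The paper avoids the deconvolution problem altogether. Instead of testing $m_a$ against $\ind_{[0,1]}\otimes x$, it tests the multiplier with symbol $I_Y+\w m_a$ against $f(s)=e^{-(\w+a)s}T(s)x$, whose Fourier transform is $(a+\w+\ui\cdot+A)^{-1}x$; by the resolvent identity the output of this multiplier is then exactly $e^{-a\cdot}T(\cdot)x$, so the pointwise ($\Ma_{p,\infty}$) bound obtained as in your first step applies directly to the semigroup orbit, and the conclusion follows by setting $a=1/t$ and using density of $Y_0$ in $Y$. If you wish to keep your test function, you need some algebraic identity of this kind that produces the orbit itself on the output side, rather than an a posteriori inversion of the smoothing $x\mapsto y_a(x)$.
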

\begin{proof}
Set $m_{a}(\xi) := (a+i\xi  +A)^{-1}\in\La(Y,X)$ for $a>0$ and $\xi\in\R$. We first prove
\begin{equation}\label{eq:infty bound}
\|m_a\|_{\Ma_{p,\infty}(\R;\La(Y,X))}\leq 2M(g(1/a)+C_{Y})
\end{equation}
for $q<\infty$. Let $f\in \Schw(\R)\otimes Y_0$ be such that $\|f\|_{L^{p}(\R;Y)}\leq 1$.
Then $\|T_{m_{a}}(f)\|_{L^{q}(\R;X)}\leq g(1/a)$, so for each $l\in\Z$ there exists a $t\in[l,l+1]$ such that
\begin{equation}\label{eq:pointwise bound}
\|T_{m_a}(f)(t)\|_{X}\leq 2g(1/a).
\end{equation}
Fix an $l\in\Z$ and let $t\in [l,l+1]$ be such that \eqref{eq:pointwise bound} holds.
Let $\tau\in[0,2]$ and note that (see \cite[Lemma II.1.9]{Engel-Nagel00})
\[
\ue^{-\ui\xi\tau}e^{-a\tau} T(\tau)(a+i\xi +A)^{-1}x=(a+i\xi +A)^{-1}x-\int_{0}^{\tau}\ue^{-(a+\ui\xi)r}T(r)x\ud r
\]
for all $\xi\in\R$ and $x\in X$. Hence
\begin{align*}
e^{-a\tau}T(\tau)T_{m_a}(f)(t)&=\frac{1}{2\pi}\!\int_{\R}\ue^{\ui\xi (t+\tau)}\ue^{-\ui\xi\tau}e^{-a\tau} T(\tau) (a+i\xi +A)^{-1}  \widehat{f}(\xi)\ud\xi\\
&=T_{m_a}(f)(t+\tau)-\!\int_{0}^{\tau}e^{-ar}T(r)f(t+\tau-r)\ud r.
\end{align*}
Rearranging terms and using \eqref{eq:pointwise bound} and H\"{o}lder's inequality, we obtain
\[
\|T_{m_a}(f)(t+\tau)\|_{X}
\leq 2M g(1/a) + \tau^{1/p'}\!M C_{Y} \leq 2M\big(g(1/a)+C_{Y}\big).
\]
Because $\tau\in[0,2]$ and $l\in\Z$ are arbitrary and since $Y_{0}\subseteq Y$ is dense, \eqref{eq:infty bound} follows. This in turn yields
\begin{equation}\label{eq:boundedness multiplier from intersection n larger than one}
\|T_{I_{Y}+\w m_a} (f)\|_{L^{\infty}(\R;X)} \leq C_{Y}\|f\|_{L^{\infty}(\R;Y)} + 2 M\w(g(1/a)+C_{Y}) \|f\|_{L^{p}(\R;Y)}
\end{equation}
for $f\in L^\infty(\R;Y_{0})\cap L^p(\R;Y_{0})$. On the other hand, for $q=\infty$ one has
\begin{equation}\label{eq:boundednessmultiplier2}
\|T_{I_{Y}+\w m_a} (f)\|_{L^{\infty}(\R;X)} \leq C_{Y}\|f\|_{L^{\infty}(\R;Y)}+\w g(1/a)\|f\|_{L^{p}(\R;Y)}
\end{equation}
for all piecewise continuous $f\in L^{p}(\R;Y_{0})\cap L^{\infty}(\R;Y_{0})$ that vanish at infinity.

Let $x\in Y_{0}$ and set $f(t):=\ue^{-(\w+a) t}T(t)x$ for $t\geq0$. It follows from $\C_-\subseteq \rho(A)$ and $[t\mapsto e^{-at}T(t)x]\in L^1([0,\infty);X)$ that (see \cite[Lemma 3.1]{Rozendaal-Veraar18b})
\begin{equation}\label{eq:Fouriertne}
\F ([t\mapsto e^{-at}T(t)x])(\cdot) = (a+\ui \cdot+A)^{-1} x\quad\text{and}\quad \F (f)(\cdot) = (a+\w + \ui \cdot+A)^{-1} x.
\end{equation}
For $t>0$ one has, by the assumptions on $Y$,
\[
\|f(t)\|_Y\leq C_{T}\|\ue^{-(\w+a) t}T(t)\|_{\La(X)}\|x\|_{Y}
\leq C_T M_{\omega} e^{-t}\|x\|_Y.
\]
Hence $f$ is piecewise continuous, vanishes at infinity, and satisfies $\|f\|_{L^{r}(\R_{+};Y)} \leq C_T M_{\omega} \|x\|_{Y}$ for $r\in \{p,\infty\}$.
Also, by \eqref{eq:Fouriertne} and the resolvent identity,
\[
\ue^{-at}T(t)x=T_{I_{Y}+\w m_{a}}(f)(t).
\]
Now \eqref{eq:boundedness multiplier from intersection n larger than one} yields
\[
e^{-at}\|T(t)x\|_X \leq C_T C_Y M_{\omega} (1+2M\omega) (g(1/a)+1) \|x\|_{Y},
\]
and \eqref{eq:boundednessmultiplier2} implies
\[
e^{-at}\|T(t)x\|_X \leq C_T C_{Y}M_{\omega}(1+\omega)(g(1/a)+1) \|x\|_{Y}.
\]
Since $Y_0\subseteq Y$ is dense, the proof is concluded by setting $a=1/t$.
\end{proof}

\begin{remark}\label{rem:large t}
Note from the proof of Theorem \ref{thm:abstract polynomial growth} that if there exist $a_{0}\in(0,\infty)$, $p,q\in[1,\infty]$, and a nondecreasing $g:(0,\infty)\to(0,\infty)$ such that \eqref{eq:pq norm} holds for all $a\in(0,a_{0})$, then $\|T(t)\|_{\La(Y,X)}\leq C (g(t)+1)$ for all $t>1/a_0$. This will be used in the proof of Theorem \ref{thm:analyticmain}.
\end{remark}

\subsection{Hilbert spaces}

We apply Theorem \ref{thm:abstract polynomial growth} by bounding the $\Ma_{p,q}$ norm in \eqref{eq:pq norm} by a supremum norm of $(a+\ui\cdot+A)^{-1}$. We first consider the Hilbert space setting, where the following theorem, in the special case where $g$ is a polynomial, improves \cite[Corollary 2.2]{Eisner-Zwart07}. More general $g$ were considered in \cite[Theorem 3.4]{Boukdir15}, where a bound of the form $\|T(t)\|_{\La(X)}\leq \frac{Cg(t)^2}{t}$ was obtained. Note that $g$ which grow sublinearly lead to exponentially stable semigroups.

\begin{theorem}\label{thm:polynomialgrowthHS}
Let $-A$ be the generator of a $C_{0}$-semigroup $(T(t))_{t\geq0}$ on a Hilbert space $X$ such that $\C_-\subseteq \rho(A)$. Suppose that there exists a nondecreasing $g:(0,\infty)\to (0,\infty)$ such that
\begin{equation}\label{eq:Hilbert space main}
\|(\lambda+ A)^{-1}\|_{\La(X)} \leq g(\Real(\lambda)^{-1})\qquad (\lambda\in\C_{+}).
\end{equation}
Then $\|T(t)\|_{\La(X)} \leq e M_{\omega} (1+2M\omega) (g(t)+1)$ for all $t>0$.
\end{theorem}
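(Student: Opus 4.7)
The plan is to apply Theorem \ref{thm:abstract polynomial growth} with $Y=X$ and $p=q=2$. The single Hilbert-space-specific ingredient is Plancherel's theorem for operator-valued Fourier multipliers, which identifies
\[
\|m\|_{\Ma_{2,2}(\R;\La(X))} = \sup_{\xi\in\R}\|m(\xi)\|_{\La(X)}
\]
for any bounded strongly measurable $m:\R\to\La(X)$.

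Setting $m_{a}(\xi):=(a+\ui\xi+A)^{-1}$ for $a>0$ and $\xi\in\R$, the point $a+\ui\xi\in\C_{+}$ has real part $a$, so the hypothesis \eqref{eq:Hilbert space main} gives $\|m_{a}(\xi)\|_{\La(X)}\leq g(1/a)$ uniformly in $\xi$. By the Plancherel identity this upgrades immediately to $\|m_{a}\|_{\Ma_{2,2}(\R;\La(X))}\leq g(1/a)$, which is exactly the multiplier bound \eqref{eq:pq norm} demanded by Theorem \ref{thm:abstract polynomial growth} with $p=q=2$.

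To invoke that theorem I still need its structural hypotheses on the pair $(Y,Y_{0})$. Condition (1) is trivial with $Y=X$, yielding $C_{T}=1$ and $C_{Y}=1$. For condition (2) I would take $Y_{0}$ to be a sufficiently regular dense subspace of $X$, for instance $D(A^{k})$ with $k$ large, on which Proposition \ref{prop:polynomialgrowthgeneral} guarantees that $\|T(t)\|_{\La(Y_{0},X)}$ grows only polynomially, so that $e^{-at}\|T(t)\|_{\La(Y_{0},X)}$ is integrable on $(0,\infty)$ for every $a>0$. Theorem \ref{thm:abstract polynomial growth} then produces
\[
\|T(t)\|_{\La(X)}\leq eM_{\w}(1+2M\w)(g(t)+1)\qquad (t>0),
\]
which is precisely the asserted bound.

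The only conceptually substantive step is the Plancherel identity for operator-valued $L^{\infty}$-multipliers; this is the classical mechanism that makes Hilbert-space stability results so much cleaner than their Banach-space counterparts. I expect no serious obstacle beyond verifying the integrability condition (2), which is essentially bookkeeping, and tracking the constants in Theorem \ref{thm:abstract polynomial growth} to recover the stated $eM_{\w}(1+2M\w)$.
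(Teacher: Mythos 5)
Your argument is exactly the paper's proof: apply Theorem \ref{thm:abstract polynomial growth} with $Y=X$, $p=q=2$, verify the multiplier bound \eqref{eq:pq norm} via Plancherel's identity $\|m_a\|_{\Ma_{2,2}(\R;\La(X))}=\sup_{\xi\in\R}\|m_a(\xi)\|_{\La(X)}\leq g(1/a)$, and verify condition \eqref{it:condabstract polynomial growth2} by taking $Y_0$ a second-order fractional domain (the paper uses $Y_{0}=X_{2}$, which coincides with your $D(A^{k})$, $k=2$) together with Proposition \ref{prop:polynomialgrowthgeneral}. The constant tracking ($C_T=C_Y=1$, so $C_2=eM_{\w}(1+2M\w)$) also matches, so the proposal is correct and essentially identical to the paper's argument.
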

\begin{proof}
Condition \eqref{it:condabstract polynomial growth2} in Theorem \ref{thm:abstract polynomial growth}, with $Y_{0}=X_{2}$ and $Y = X$, is satisfied by Proposition \ref{prop:polynomialgrowthgeneral}.  Moreover, Plancherel's identity yields
\[
\|(a+\ui\cdot + A)^{-1}\|_{\Ma_{2,2}(\R;\calL(X))} = \|(a+\ui\cdot + A)^{-1}\|_{L^\infty(\R;\calL(X))} \leq g(1/a),
\]
so that Theorem \ref{thm:abstract polynomial growth} concludes the proof.
\end{proof}

The following example, an extension of an example from \cite{Eisner-Zwart06}, shows that for $g$ a polynomial, Theorem \ref{thm:polynomialgrowthHS} is optimal up to arbitrarily small polynomial loss.

\begin{example}\label{ex:Hilbert}
Fix $\gamma\in (0,1)$ and $n\in\N$. It is shown in \cite{Eisner-Zwart06} that there exist a Hilbert space $X$, a $C_{0}$-semigroup $(S(t))_{t\geq0}\subseteq\La(X)$ with bounded generator $-A$, and constants $C_{1},C_{2}\geq0$ such that $\sigma(A)\subseteq\overline{\C_{+}}$,
\[
\|R(\lambda,A)\|_{\La(X)}\leq\frac{C_{1}}{\Real(\lambda)}\qquad(\lambda\in \C_{-})
\]
and $\|S(t)\|_{\La(X)}\geq C_{2}(t^{\gamma}+1)$ for all $t\geq0$. Let $J\in\La(X^{n})$ be the $n\times n$ operator matrix with $J_{k, k+1} = -I_{X}$ for $k\in \{1, \ldots, n-1\}$, and $J_{k,l}=0$ for $l\neq k+1$. Set $\mathcal{A} := A(I_{X^{n}}+J)$, and let $(T(t))_{t\geq0}\subseteq\La(X^{n})$ be the $C_{0}$-semigroup generated by $-\mathcal{A}$. Then $T(t) = S(t) e^{-tJ}$ for $t\geq0$, and $\|T(t)\|_{\La(X^{n})}\geq c(t^{\gamma+n-1}+1)$ for some $c>0$ independent of $t$. Moreover, there exists a $C\geq0$ such that $\|(\lambda+\mathcal{A})^{-1}\|_{\La(X^{n})}\leq C (\Real(\lambda)^{-n} + 1)$ for all $\lambda\in \C_+$.
\end{example}

\subsection{Asymptotically analytic semigroups}\label{subsec:analytic}

For a $C_{0}$-semigroup $(T(t))_{t\geq0}$ with generator $-A$ on a Banach space $X$, the \emph{non-analytic growth bound} is
\[
\zeta (T)  := \inf \Big\{ \omega\in\R \Big|  \sup_{t>0}e^{-\omega t}\| T(t)-S(t)\|<\infty\;\mbox{for some $S\in\mathcal{H}(\B(X))$}\Big\},
\]
where $\mathcal{H}(\B(X))$ is the set of $S\colon(0,\infty)\to\B(X)$ having an exponentially bounded analytic extension to some sector containing $(0,\infty)$. Let $s_{0}^{\infty}(-A)$  be the infimum over all $\w\in\R$ for which there exists an $R\in(0,\infty)$ such that $\{\eta+\ui\xi\mid \eta>\omega,\xi\in\R,\abs{\xi}\geq R\}\subseteq\rho(-A)$ and
\[
\sup\{\|(\eta+\ui\xi+A)^{-1}\|_{\La(X)}\mid \eta>\omega,\xi\in\R,\abs{\xi}\geq R\}<\infty.
\]
If $\zeta(T)<0$ then $(T(t))_{t\geq 0}$ is \emph{asymptotically analytic}. Then $s_{0}^{\infty}(-A)<0$, and the converse implication holds if $X$ is a Hilbert space. It is trivial that if $(T(t))_{t\geq 0}$ is an analytic semigroup then $\zeta(T)=-\infty$. In fact, $\zeta(T)=-\infty$ if $(T(t))_{t\geq 0}$ is eventually differentiable. For more on asymptotically analytic semigroups see \cite{Blake99,BaBlSr03,Batty-Srivastava03}.

\begin{theorem}\label{thm:analyticmain}
Let $-A$ be the generator of an asymptotically analytic $C_{0}$-semigroup $(T(t))_{t\geq0}$ on a Banach space $X$ such that $\C_-\subseteq \rho(A)$. Suppose that there exists a nondecreasing $g:(0,\infty)\to (0,\infty)$ such that
\[
\|(\lambda+ A)^{-1}\|_{\La(X)} \leq g(\Real(\lambda)^{-1})\qquad (\lambda\in\C_{+}).
\]
Then there exists a $C\geq0$ such that $\|T(t)\|_{\La(X)} \leq C (g(t)+1)$ for all $t>0$.
\end{theorem}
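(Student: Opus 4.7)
The plan is to apply Theorem~\ref{thm:abstract polynomial growth} with $Y = X$, using $Y_0 = \D((\omega + A)^{\gamma})$ for some $\gamma > 1$. Condition~\eqref{it:condabstract polynomial growth1} is automatic with $C_T = 1$, and condition~\eqref{it:condabstract polynomial growth2} follows from Proposition~\ref{prop:polynomialgrowthgeneral}. It then suffices to verify a Fourier multiplier bound
\[
\|(a+\ui\cdot + A)^{-1}\|_{\Ma_{p,q}(\R; \La(X))} \leq C (g(1/a) + 1)
\]
for some $p, q \in [1,\infty]$, uniformly in $a \in (0, a_0)$ for some small $a_0 > 0$. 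Combined with Remark~\ref{rem:large t}, this will yield the desired growth bound for $t > 1/a_0$; for $t \in (0, 1/a_0]$ the bound is immediate from the boundedness of $T$ on compact intervals.

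The asymptotic analyticity gives $\zeta(T) < 0$, hence $s_{0}^{\infty}(-A) < 0$: there exist $\delta > 0$, $R > 0$ and $M_0 \geq 0$ such that $\Omega := \{\lambda \in \C \mid \Real(\lambda) > -\delta,\, |\Imag(\lambda)| \geq R\} \subseteq \rho(-A)$ with $\sup_{\lambda\in\Omega}\|(\lambda + A)^{-1}\|_{\La(X)} \leq M_0$. Choose a smooth cutoff $\chi \in C_{\mathrm{c}}^{\infty}(\R)$ with $\chi \equiv 1$ on $[-2R, 2R]$ and $\supp\chi \subseteq [-3R, 3R]$, and split $m_a(\xi) := (a+\ui\xi+A)^{-1}$ as
\[
m_a = \chi\, m_a + (1-\chi)\, m_a \;=:\; m_a^{\flat} + m_a^{\sharp}.
\]

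For the high-frequency part $m_a^{\sharp}$: for $a \in (0, \delta/2)$, the map $\xi \mapsto m_a^{\sharp}(\xi)$ admits a holomorphic extension to a horizontal strip around $\R$, with a uniform operator-norm bound depending only on $M_0$. Here I invoke the Fourier multiplier theorem of Batty--Srivastava~\cite{Batty-Srivastava03}, which converts such strip-analyticity and boundedness of an operator-valued symbol into a bounded $L^{\infty}\to L^{\infty}$ multiplier (acting on the regulated functions vanishing at infinity, as required by Theorem~\ref{thm:abstract polynomial growth}) with multiplier norm bounded by a constant $C_0$ independent of~$a$.

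For the low-frequency part $m_a^{\flat}$, which has compact frequency support and operator-norm bound $g(1/a)$, the task is to estimate $\|m_a^{\flat}\|_{\Ma_{\infty,\infty}}$ \emph{linearly} in $g(1/a)$; this is the main technical obstacle, since a naive integration by parts using $\partial_{\xi} m_a = -\ui\, m_a^{2}$ would produce losses of order $g(1/a)^{2}$ or worse. The resolution uses the spectral information provided by asymptotic analyticity: the set $\sigma(A) \cap \{|\Imag\lambda| < 2R,\ \Real\lambda \leq \delta/2\}$ is compact, so the associated Riesz projection $P_0$ splits $X = P_0 X \oplus (I - P_0)X$, on the second summand of which $\omega + A$ is sectorial with resolvent enjoying analytic-semigroup estimates. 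The multiplier $m_a^{\flat}$ acts separately on the two summands; on $(I - P_0)X$ the Batty--Srivastava apparatus again yields a uniform bound, while on $P_0 X$ the compactness of the spectrum together with the $g$-hypothesis and the compact support of $\chi$ produce an $L^{1}(\R;\La(P_0 X))$ kernel estimate of order $g(1/a)$. Summing the two contributions gives the required multiplier bound, and Theorem~\ref{thm:abstract polynomial growth} concludes the proof.
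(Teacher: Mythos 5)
Your overall skeleton---verify the hypotheses of Theorem \ref{thm:abstract polynomial growth} with $Y=X$, $Y_{0}=X_{\gamma}$, split $m_{a}=(a+\ui\cdot+A)^{-1}$ by a frequency cutoff, treat the high frequencies with Batty--Srivastava, and conclude via Remark \ref{rem:large t}---is the same as the paper's. But your treatment of the low-frequency part has a genuine gap. The Riesz projection $P_{0}$ you invoke need not exist: the set $\sigma(A)\cap\{\Real\lambda\le\delta/2,\ \abs{\Imag\lambda}<2R\}$ is compact but in general not relatively open in $\sigma(A)$. The hypotheses give resolvent control only on $\C_{-}$ (equivalently, control of $(\lambda+A)^{-1}$ for $\lambda\in\C_{+}$) and, via $s_{0}^{\infty}(-A)<0$, on $\{\Real\lambda<\delta,\ \abs{\Imag\lambda}\ge R\}$; nothing prevents $\sigma(A)$ from containing, say, a real segment $[0,1]$ joining the box to spectrum with large real part, so no contour lying in $\rho(A)$ encircles the low-frequency spectral piece and the splitting $X=P_{0}X\oplus(I-P_{0})X$ is unavailable. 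Even granting $P_{0}$, the asserted $L^{1}(\R;\La(P_{0}X))$ kernel bound of order $g(1/a)$ is unsubstantiated---it runs into exactly the squaring problem ($\partial_{\xi}m_{a}=-\ui\, m_{a}^{2}$) that you yourself flag. Finally, the appeal to Batty--Srivastava as a general principle ``bounded analytic operator-valued symbol on a strip yields an $\Ma_{\infty,\infty}$ multiplier'' is not what those results say; what they provide (and what the paper cites) is that, because $\zeta(T)<0$, the high-frequency cut-off $(1-\psi(\cdot))(a+\ui\cdot+A)^{-1}$ belongs to $\Ma_{1,\infty}(\R;\La(X))$ with a bound uniform in $a\in(0,a_{0})$; note also that Theorem \ref{thm:abstract polynomial growth} requires one common pair $(p,q)$ for the whole symbol, whereas you mix $\Ma_{\infty,\infty}$ for the high part with a different mechanism for the low part.

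The obstacle you labour over disappears once you take $p=1$, $q=\infty$, the pair for which the high-frequency bound is actually available: for a compactly supported cutoff $\psi$ with $\|\psi\|_{\infty}\le 1$,
\[
\|\psi(\cdot)(a+\ui\cdot+A)^{-1}\|_{\Ma_{1,\infty}(\R;\La(X))}\le\frac{1}{2\pi}\int_{\R}\|\psi(\xi)(a+\ui\xi+A)^{-1}\|_{\La(X)}\ud \xi\le\frac{\abs{\supp\psi}}{2\pi}\,g(1/a),
\]
since $\|\widehat{f}\|_{L^{\infty}}\le\|f\|_{L^{1}}$; this is linear in $g(1/a)$ and requires no smoothness of the symbol, no spectral projection, and no analyticity at low frequencies. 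Adding this to the uniform Batty--Srivastava bound for $(1-\psi)m_{a}$ gives \eqref{eq:pq norm} with $q=\infty$ for all $a\in(0,a_{0})$, and Remark \ref{rem:large t} together with boundedness of $\|T(t)\|_{\La(X)}$ on $[0,1/a_{0}]$ finishes the proof. This is precisely the paper's argument; as written, your proposal does not go through.
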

\begin{proof}
By \cite[Theorem 3.6 and Lemmas 3.2 and 3.5]{Batty-Srivastava03} there exist $a_{0}>0$ and $\psi\in C_{c}^{\infty}(\R)$ such that $(1-\psi(\cdot))(a+\ui\cdot+A)^{-1}\in \Ma_{1,\infty}(\R;\La(X))$ for all $a\in(0,a_{0})$, with
\[
C_{1}:=\sup\{\|(1-\psi(\cdot))(a+\ui\cdot+A)^{-1}\|_{\Ma_{1,\infty}(\R;\La(X))}\mid a\in(0, a_{0})\}<\infty.
\]
On the other hand, a straightforward estimate (see also \cite[Proposition 3.1]{Rozendaal-Veraar18b}) shows that $\psi(\cdot)(a+i\cdot+A)^{-1}\in\Ma_{1,\infty}(\R;\La(X))$ for all $a>0$, with
\[
\|\psi(\cdot)(a+i\cdot+A)^{-1}\|_{\Ma_{1,\infty}(\R;\La(X))}\leq \frac{1}{2\pi}\|\psi(\cdot)(a+i\cdot+A)^{-1}\|_{L^{1}(\R;\La(X))}\leq C_{2}g(1/a)
\]
for some $C_{2}\geq0$ independent of $a$. It follows that
\[
\|(a+i\cdot+A)^{-1}\|_{\Ma_{1,\infty}(\R;\La(X))}\leq C_{1}+\frac{R}{2\pi}g(1/a)\leq C_{3}g(1/a)\qquad(a\in(0, a_{0})),
\]
where $C_{3}=C_{1}g(1/a_{0})^{-1}+C_{2}$. Then Remark \ref{rem:large t} yields a constant $C'\geq0$ such that $\|T(t)\|_{\La(X)}\leq C'(g(t)+1)$ for all $t>1/a_{0}$. Since $\sup\{\|T(t)\|_{\La(X)}\mid t\in[0,1/a_{0}]\}<\infty$, this concludes the proof.
\end{proof}

\subsection{Positive semigroups\label{subs:possemi}}

We now consider positive $C_{0}$-semigroups on various Banach lattices. To this end we first prove a multiplier theorem for positive kernels. Part of this result is already contained in \cite[Theorem 3.24]{Rozendaal-Veraar18}. Recall that a subspace $X$ of a Banach lattice $Y$ is a \emph{sublattice} if $x\vee y,x\wedge y\in X$ for all $x,y\in X$.

\begin{proposition}\label{prop:positivekernel}
Let $n\in \N$, $p\in[1,\infty]$, and let $X$ be a Banach lattice and $m:\Rn\to\La(X)$ an $X$-strongly measurable map of moderate growth. Let $K:\Rn\to\La(X)$ be such that $K(\cdot)x\in L^{1}(\Rn;X)$ and $m(\xi)x=\F(K(\cdot)x)(\xi)$ for all $x\in X$ and $\xi\in\Rn$, and such that $K(t)$ is a positive operator for all $t\in\Rn$. Suppose that one of the following conditions holds:
\begin{enumerate}
\item\label{it:Lp} $X=L^{p}(\Omega)$ for $\Omega$ a measure space;
\item\label{it:Cb} $p=\infty$ and $X$ is a closed subspace of $C_{b}(\Omega)$, for $\Omega$ a topological space, such that either $\mathbf{1}_{\Omega}\in X$ or $X$ is a sublattice.
\end{enumerate}
Then $m\in\Ma_{p,p}(\Rn;\La(X))$ with
\[
\|m\|_{\Ma_{p,p}(\Rn;\La(X))}= \|m(0)\|_{\La(X)}.
\]
\end{proposition}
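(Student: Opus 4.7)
The plan is to prove $\|m\|_{\Ma_{p,p}(\R^{n};\La(X))} = \|m(0)\|_{\La(X)}$ by matching lower and upper bounds. The inequality $\|m\|_{\Ma_{p,p}} \geq \|m(0)\|_{\La(X)}$ is soft and holds for any Banach lattice, while the reverse bound is the substantive content and exploits positivity of $K$.

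For the lower bound, I would test the multiplier against $f_{R} := \mathbf{1}_{[-R,R]^{n}}\otimes x$ for $x \in X$. Since $K(\cdot)x \in L^{1}(\R^{n};X)$, dominated convergence gives $T_{m}f_{R}(s) = \int_{[s-R,s+R]^{n}} K(r)x\,dr \to m(0)x$ in $X$ as $R\to\infty$ for every $s \in \R^{n}$. Comparing $\|T_{m}f_{R}\|_{L^{p}(\R^{n};X)}$ with $\|f_{R}\|_{L^{p}(\R^{n};X)} = (2R)^{n/p}\|x\|_{X}$ (and using the constant test function in the closure of simple functions for $p=\infty$) produces ratios converging to $\|m(0)x\|_{X}/\|x\|_{X}$. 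Taking the supremum over $x$ yields the lower bound.

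For the upper bound in case (\ref{it:Lp}), I first reduce to $f \geq 0$ via the lattice estimate $|T_{m}f(s)| \leq T_{m}|f|(s)$ afforded by $K \geq 0$. Under the identification $L^{p}(\R^{n};L^{p}(\Omega)) = L^{p}(\R^{n}\times\Omega)$ (for $p<\infty$), I slice pointwise in $\omega$: using the positive integral kernel $k_{r}(\omega,\omega')$ of $K(r)$, one obtains
\[
T_{m}f(\cdot,\omega) \;=\; \int_{\Omega} k_{\cdot}(\omega,\omega')\ast f(\cdot,\omega')\, d\omega',
\]
with $\ast$ denoting convolution in $\R^{n}$. Successive application of Minkowski's integral inequality in $L^{p}(\R^{n})$ and scalar Young's inequality yields
\[
\|T_{m}f(\cdot,\omega)\|_{L^{p}(\R^{n})} \leq \int_{\Omega} m(0)(\omega,\omega')\, F(\omega')\, d\omega' = (m(0)F)(\omega),
\]
where $F(\omega') := \|f(\cdot,\omega')\|_{L^{p}(\R^{n})}$. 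Raising to the $p$-th power, integrating in $\omega$, and using $\|F\|_{L^{p}(\Omega)} = \|f\|_{L^{p}}$ produces $\|T_{m}f\|_{L^{p}} \leq \|m(0)\|_{\La(X)}\|f\|_{L^{p}}$. The subcase $p=\infty$ of (\ref{it:Lp}) is handled instead by the direct lattice bound $|T_{m}f(s)| \leq m(0)\mathbf{1}_{\Omega}$ in $L^{\infty}(\Omega)$ valid when $\|f\|_{\infty} \leq 1$.

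For case (\ref{it:Cb}) with $p=\infty$: if $\mathbf{1}_{\Omega} \in X$, then $\|f\|_{\infty} \leq 1$ gives $|f(t)| \leq \mathbf{1}_{\Omega}$ in $X$, and positivity of $K$ yields $|T_{m}f(s)| \leq \int K(s-t)\mathbf{1}_{\Omega}\, dt = m(0)\mathbf{1}_{\Omega}$ in $X$, so $\|T_{m}f(s)\|_{X} \leq \|m(0)\mathbf{1}_{\Omega}\|_{X} \leq \|m(0)\|_{\La(X)}$ since $\|\mathbf{1}_{\Omega}\|_{X}=1$. The sublattice subcase without $\mathbf{1}_{\Omega} \in X$ is reduced to the previous via a monotone approximation by an approximate unit $(e_{\alpha}) \subseteq X_{+}$ from the sublattice. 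The main technical obstacle is justifying the integral-kernel representation of $K(r)$ used in case (\ref{it:Lp}) for $1 < p < \infty$, since general positive operators on $L^{p}(\Omega)$ need not admit classical integral kernels; I would resolve this by approximating $K(r)$ in the strong operator topology by the finite-rank positive operators $E_{n}K(r)E_{n}$, where $E_{n}$ is the conditional expectation onto a finite atomic sub-$\sigma$-algebra of $\Omega$, and passing to the limit via Fatou's lemma, exploiting $\|E_{n}m(0)E_{n}\|_{\La(X)} \leq \|m(0)\|_{\La(X)}$.
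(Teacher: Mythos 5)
Your lower bound and your treatment of $p=\infty$ when $\mathbf{1}_{\Omega}\in X$ (including $X=L^{\infty}(\Omega)$) are fine and essentially match the paper: the paper dominates a simple test function by one positive element $g\in X$ and uses $|T_{m}(f)(t)|\leq\int K(s)g\,\mathrm{d}s=m(0)g$. Note also that for $X=L^{p}(\Omega)$ with $p<\infty$ the paper does not argue from scratch; it simply invokes the positive-kernel results of Weis and of Rozendaal--Veraar, so your self-contained argument there is extra work that must then actually be completed.

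Two points are genuinely problematic. First, the sublattice subcase of the $C_{b}(\Omega)$ setting: the reduction via an approximate unit $(e_{\alpha})\subseteq X_{+}$ does not work as stated. For instance for $X=C_{0}(\Omega)$ with $\Omega$ non-compact, $\mathbf{1}_{\Omega}$ cannot be approximated in norm from $X$, and a monotone/pointwise approximate unit gives no domination $|f(t)|\leq\|f\|_{\infty}e_{\alpha}$, which is exactly what your positivity argument needs; nor is it clear how the bound would pass to the limit. The fix is much simpler and is what the paper does: the $\Ma_{\infty,\infty}$-norm is tested only on $X$-valued simple functions $f=\sum_{k=1}^{m}\mathbf{1}_{E_{k}}\otimes x_{k}$, so one may take $g=\bigvee_{1\leq k\leq m}|x_{k}|$, which lies in $X$ because $X$ is a sublattice and the supremum is finite, and satisfies $|f(t)|\leq g$ with $\|g\|_{X}=\|f\|_{L^{\infty}(\Rn;X)}$; no approximate unit or reduction to the $\mathbf{1}_{\Omega}$ case is needed. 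Second, the kernel-based argument for $X=L^{p}(\Omega)$, $1\leq p<\infty$: the sliced Minkowski/Young computation is the right idea, but the repair of the missing integral-kernel representation is only a program. For a general measure space $\Omega$ you must first reduce to a $\sigma$-finite, countably generated situation before conditional expectations $E_{n}$ with $E_{n}\to I$ strongly (on the relevant separable subspace) exist; you must justify passing to the limit in $T_{E_{n}K(\cdot)E_{n}}f\to T_{m}f$, which is not immediate because only strong integrability $K(\cdot)x\in L^{1}(\Rn;X)$ is assumed, not integrability of $\|K(\cdot)\|_{\La(X)}$ --- one needs an extra ingredient such as the closed-graph bound $\|K(\cdot)x\|_{L^{1}(\Rn;X)}\leq C\|x\|_{X}$ or a positivity domination; and the identification of the kernel of $E_{n}m(0)E_{n}$ with the integral of the kernels of $E_{n}K(r)E_{n}$ (Tonelli) should be recorded. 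Alternatively, the kernel issue can be avoided altogether by the standard domination/duality argument for positive operators on $L^{p}$, which is essentially the content of the results the paper cites; as written, this part of your proposal is a sketch with real obligations left open rather than a proof.
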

\begin{proof}
It is well known that
\[
\|m\|_{\Ma_{p,p}(\Rn;\La(X))}\geq \sup_{\xi\in\Rn}\|m(\xi)\|_{\La(X)}\geq \|m(0)\|_{\La(X)}
\]
if $m\in\Ma_{p,p}(\Rn;\La(X))$. In the case where $X=L^{p}(\Omega)$ for $p\in[1,\infty)$ it follows from the proof of \cite[Theorem 3.24]{Rozendaal-Veraar18} or \cite[Theorem 2]{Weis98} that $m\in\Ma_{p,p}(\R;\La(X))$ with the required estimate.

Next, assume that $p=\infty$ and let $f:=\sum_{k=1}^{m}\one_{E_{k}}\otimes x_{k}$ for $m\in\N$, $E_{1},\ldots, E_{n}\subseteq \Rn$ disjoint and measurable, and $x_{1},\ldots, x_{n}\in X$. If $\mathbf{1}_{\Omega}\in X$ set $g\equiv\|f\|_{L^{\infty}(\Rn;X)}$, and for $X$ a sublattice set $g=\vee_{1\leq k\leq m}|x_{k}|$. In both cases $g\in X$, $|f(t)|\leq g$ for all $t\in\Rn$, and $\|f\|_{L^{\infty}(\Rn;X)}=\|g\|_{X}$. Then
\[
|T_{m}(f)(t)|\leq \int_{\Rn}|K(s)f(t-s)|\ud s\leq \int_{\Rn}K(s)g\ud s=m(0)g
\]
for all $t\in\Rn$. Hence
\[
\|T_{m}(f)\|_{L^{\infty}(\Rn;X)}\leq \|m(0)\|_{\La(X)}\|g\|_{X}=\|m(0)\|_{\La(X)}\|f\|_{L^{\infty}(\Rn;X)},
\]
which concludes the proof.
\end{proof}

We now prove our main result for positive semigroups.

\begin{theorem}\label{thm:positivepolygrowth}
Let $-A$ be the generator of a positive $C_{0}$-semigroup $(T(t))_{t\geq0}$ on a Banach lattice $X$ such that $\C_-\subseteq\rho(A)$. Assume that one of the following conditions holds:
\begin{enumerate}
\item\label{it:posLp} $X=L^{p}(\Omega)$ for $p\in[1,\infty]$ and $\Omega$ a measure space;
\item\label{it:posC} $p=\infty$ and $X$ is a closed subspace of $C_{b}(\Omega)$, for $\Omega$ a topological space, such that either $\mathbf{1}_{\Omega}\in X$ or $X$ is a sublattice.
\end{enumerate}
Suppose that there exists a nondecreasing $g:(0,\infty)\to (0,\infty)$ such that
\begin{equation}\label{eq:resolventLp}
\|(a+ A)^{-1}\|_{\calL(X)}\leq g(1/a)\qquad(a\in(0,\infty)).
\end{equation}
Then $\|T(t)\|_{\La(X)} \leq C(g(t)+1)$ for all $t>0$, where $C=e M_{\omega}(1+2M\omega)$ for \eqref{it:posLp}, and $C=eM_{\w}(1+\w)$ if \eqref{it:posC} holds.
\end{theorem}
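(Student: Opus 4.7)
The plan is to apply Theorem \ref{thm:abstract polynomial growth} with $Y=X$, where the required Fourier multiplier bound \eqref{eq:pq norm} is supplied by Proposition \ref{prop:positivekernel}: positivity of $T(t)$ produces a positive convolution kernel whose Fourier transform is the resolvent.

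The key identification is that for $a>\omega_{0}(T)$ the Laplace transform identity
\[
(a+\ui\xi+A)^{-1}x=\int_{0}^{\infty}\ue^{-(a+\ui\xi)t}T(t)x\,\mathrm{d}t\qquad (x\in X,\ \xi\in\R)
\]
exhibits the Fourier multiplier symbol $m_{a}(\xi):=(a+\ui\xi+A)^{-1}$ as the Fourier transform of the convolution kernel $K_{a}(t):=\mathbf{1}_{[0,\infty)}(t)\ue^{-at}T(t)$, and each $K_{a}(t)$ is a positive operator because $T(t)$ is. Applying Proposition \ref{prop:positivekernel} (in condition \eqref{it:Lp} for case \eqref{it:posLp} with $p$ finite, or in condition \eqref{it:Cb} with $p=\infty$ otherwise) then yields
\[
\|m_{a}\|_{\Ma_{p,p}(\R;\La(X))}=\|(a+A)^{-1}\|_{\La(X)}\leq g(1/a),
\]
which is exactly \eqref{eq:pq norm} with $q=p$.

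To ensure this works at every $a>0$, I first reduce to the case $\omega_{0}(T)\leq 0$ by a shift argument. For fixed $\eta>0$, monotonicity of $g$ gives $\|(a+\eta+A)^{-1}\|_{\La(X)}\leq g(1/\eta)$ for all $a\geq 0$, so the classical $\alpha=0$ version of the present result---Weis's theorem \cite[Theorem 5.3.1]{ArBaHiNe11} on $L^{p}$ and its analog on $C_{b}$-type spaces---applied to the positive semigroup $(\ue^{-\eta t}T(t))_{t\geq 0}$ yields uniform boundedness of this perturbed semigroup. Sending $\eta\downarrow 0$ forces $\omega_{0}(T)\leq 0$, so the Laplace representation and the multiplier bound of the previous paragraph hold for every $a>0$.

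With this multiplier bound at hand, Theorem \ref{thm:abstract polynomial growth} with $Y=X$ (so $C_{T}=C_{Y}=1$) and $Y_{0}:=X_{\gamma}$ for any $\gamma>1$ finishes the argument: hypothesis \eqref{it:condabstract polynomial growth1} is trivial, and hypothesis \eqref{it:condabstract polynomial growth2} follows from Proposition \ref{prop:polynomialgrowthgeneral} together with $\omega_{0}(T)\leq 0$. Choosing $p=q<\infty$ in case \eqref{it:posLp} with $p<\infty$ produces $C_{q}=eM_{\omega}(1+2M\omega)$, while $p=q=\infty$ in case \eqref{it:posC} (and in the $p=\infty$ subcase of \eqref{it:posLp}) produces $C_{\infty}=eM_{\omega}(1+\omega)$, matching the stated constants. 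The only substantive new input beyond Theorem \ref{thm:abstract polynomial growth} is Proposition \ref{prop:positivekernel}; the main technicality is the preliminary reduction to $\omega_{0}(T)\leq 0$, without which the kernel representation would fail for small $a$.
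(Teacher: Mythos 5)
Your core mechanism is the same as the paper's: positivity makes $K_a(t)=\mathbf{1}_{[0,\infty)}(t)e^{-at}T(t)$ a positive kernel whose symbol is $(a+\ui\cdot+A)^{-1}$, Proposition \ref{prop:positivekernel} turns the real-axis bound \eqref{eq:resolventLp} into the $\Ma_{p,p}$ bound, and Theorem \ref{thm:abstract polynomial growth} with $Y=X$ finishes. The problem is the preliminary step you use to make this legitimate for every $a>0$, namely the reduction to $\w_0(T)\leq 0$. For case \eqref{it:posLp} with $p<\infty$ your citation of Weis's theorem \cite[Theorem 5.3.1]{ArBaHiNe11} does work (indeed it gives $\w_0(T)=s(-A)\leq 0$ directly, without the $\eta$-shift), but it imports exactly the deep result that the paper's proof re-derives internally. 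For $p=\infty$ and for case \eqref{it:posC} there is no ``analog on $C_b$-type spaces'' you can cite: the equality $\w_0(T)=s(-A)$ for positive semigroups on closed subspaces of $C_b(\Omega)$ with $\mathbf{1}_\Omega\in X$ or $X$ a sublattice (for $\Omega$ an arbitrary topological space) is stated in the paper as a \emph{consequence} of Theorem \ref{thm:positivepolygrowth}, not as a prior input, and it is not available in the literature at this generality (classical results cover $C_0(\Omega)$ and $C(K)$, and $L^\infty$ only via Lotz's theorem plus the spectral mapping theorem for norm-continuous semigroups, none of which you invoke). So in those cases your argument is circular, and without $\w_0(T)\leq 0$ the kernel $e^{-a\cdot}T(\cdot)x$ need not be integrable for small $a$, which is precisely the point at issue.

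The paper closes this gap self-containedly: it regularizes the kernel, setting $B_n=n^2(n+A)^{-2}$ and $K_{n,b}(t)=e^{-bt}T(t)B_n$ for $b\in(0,\min(a,\w))$, so that $K_{n,b}(\cdot)x\in L^1$ (smooth initial data plus Proposition \ref{prop:polynomialgrowthgeneral}), applies Proposition \ref{prop:positivekernel} to this positive kernel to get $\|(b+\ui\cdot+A)^{-1}B_n\|_{\Ma_{p,p}}\lesssim g(1/b)M_\w^2$, and then tests against $f(t)=e^{-\w t}T(t)x$ and lets $n\to\infty$ to conclude $e^{-bt}\|T(t)x\|\leq C_b\|x\|$ for every $b>0$. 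This bootstrap yields exactly the sub-exponential growth (hence the $L^1$ Laplace representation for every $a>0$) that you tried to obtain by citation, and it works uniformly in all cases \eqref{it:posLp} and \eqref{it:posC}. A secondary, smaller point: your appeal to Proposition \ref{prop:polynomialgrowthgeneral} for condition \eqref{it:condabstract polynomial growth2} of Theorem \ref{thm:abstract polynomial growth} presupposes a resolvent bound on all of $\C_+$, whereas \eqref{eq:resolventLp} is only a bound on $(0,\infty)$; this is harmless once $\w_0(T)\leq 0$ is known (then one may even take $Y_0=X$), but as written it is another place where the missing reduction is silently used.
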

\begin{proof}
Set $p=\infty$ if \eqref{it:posC} holds. Let $a>0$. We first claim that $[t\mapsto \ue^{-at}T(t)x]\in L^{1}([0,\infty);X)$ for all $x\in X$, with
\[
\F([t\mapsto \ue^{-at}T(t)x])(\xi)=(a+i\xi+A)^{-1}x\qquad (\xi\in\R).
\]
To prove this let $n\geq2\w$ and $b\in(0,\min(a,\w))$, and set $B_{n}:=n^{2}(n+A)^{-2}$ and $K_{n,b}(t):=\ue^{-bt}T(t)B_{n}$ for $t\geq0$. Then $K_{n,b}(t)$ is a positive operator for all $t\geq0$, and $K_{n,b}(\cdot)x\in L^{1}(\R;X)$ with
\[
\F(K_{n,b}(\cdot)x)(\xi)=(b+\ui\xi+A)^{-1}B_{n}x\qquad (\xi\in\R),
\]
where we use Proposition \ref{prop:polynomialgrowthgeneral}. By Proposition \ref{prop:positivekernel}, $(b+i\cdot+A)^{-1}B_{n}\in \Ma_{p,p}(\R;\La(X))$ with
\begin{equation}\label{eq:multipliermodified}
\|(b+i\cdot+A)^{-1}B_{n}\|_{\Ma_{p,p}(\R;\La(X))}\leq 4 g(1/b)M_{\w}^{2},
\end{equation}
where we used \eqref{eq:assTM} to deduce that $\|n(n+A)^{-1}\|_{\La(X)}\leq \frac{n}{n-\omega+1} M_{\omega}\leq 2M_{\omega}$.
Let $x\in X$ and set $f(t):=\ue^{-\w t}T(t)x$ for $t\geq0$. Then $f\in L^{p}(\R;X)\cap L^{1}(\R;X)$ is piecewise continuous and vanishes at infinity, and $K_{n,b}\ast f=T_{(b+i\cdot+A)^{-1}B_{n}}(f)$. Moreover,
\[
K_{n,b}\ast f(t)=\int_{0}^{t}\ue^{-(\w-b)s}\ue^{-bt}T(t)B_{n}x\ud s =\frac{1-\ue^{-(\w-b)t}}{\w-b}\ue^{-bt}T(t)B_{n}x.
\]
Since $B_{n}\to I_{X}$ strongly as $n\to\infty$, \eqref{eq:multipliermodified} yields a constant $C_{b}\geq0$ such that
\[
e^{-bt}\|T(t)x\|_{X}\leq C_{b}\|x\|_{X}\qquad(t\geq1).
\]
This shows that $[t\mapsto \ue^{-at}T(t)x]\in L^{1}([0,\infty);X)$ for all $x\in X$, and the identity 
\[
\F([t\mapsto \ue^{-at}T(t)x])(\xi)=(a+i\xi+A)^{-1}x\qquad (\xi\in\R)
\]
is then straightforward. This proves the claim.

Finally, since $\ue^{-at}T(t)$ is a positive operator for all $t\geq0$, Proposition \ref{prop:positivekernel} yields $(a+\ui\cdot+A)^{-1}\in \Ma_{p,p}(\R;\La(X))$ with
\[
\|(a+i\cdot+A)^{-1}\|_{\Ma_{p,p}(\R;\La(X))}=\|(a+A)^{-1}\|_{\La(X)}\leq g(1/a).
\]
Now Theorem \ref{thm:abstract polynomial growth} concludes the proof.
\end{proof}

Theorem \ref{thm:positivepolygrowth} implies in particular that $\w_{0}(T) = s(-A)$ for a positive semigroup $(T(t))_{t\geq0}$ on a space $X$ as in \eqref{it:posLp} or \eqref{it:posC}. For \eqref{it:posLp} this result was originally obtained in \cite{Weis95}.
It is possible to extend Theorem \ref{thm:positivepolygrowth} to fractional domains on more general Banach lattices, by using Fourier multipliers on $X$-valued Besov spaces as in \cite[Theorem 5.7]{Rozendaal-Veraar18b}, but we will not pursue this matter here.

We do not know whether the growth rate in Theorem \ref{thm:positivepolygrowth} is optimal. It follows from \cite[Example 4.4]{Weis97} that the positivity assumption cannot be dropped in case \eqref{it:Lp} for $p\neq 2$. Moreover, \cite[Example 5.1.11]{ArBaHiNe11}) shows that Theorem \ref{thm:positivepolygrowth} is not valid on $X = L^{p}(\Omega)\cap L^{q}(\Omega)$ for $\Omega$ a measure space and $p,q\in [1, \infty)$ with $p\neq q$.

\subsection{Fourier and Rademacher type}

We now improve Proposition \ref{prop:polynomialgrowthgeneral} under additional geometric assumptions on $X$.
A Banach space $X$ is said to have {\em Fourier type $p\in [1, 2]$} if the Fourier transform $\F$ is bounded from $L^p(\R;X)$ into $L^{p'}(\R;X)$. See \cite{HyNeVeWe16} for more on Fourier type. Note in particular that $L^{u}(\Omega)$, for $\Omega$ a measure space and $u\in[1,\infty]$, has Fourier type $p=\min(u,u')$.

\begin{proposition}\label{prop:polynomialgrowthFouriertype}
Let $-A$ be the generator of a $C_{0}$-semigroup $(T(t))_{t\geq0}$ on a Banach space $X$ with Fourier type $p\in[1,2]$ such that $\C_-\subseteq\rho(A)$. Suppose that there exists a nondecreasing $g:(0,\infty)\to (0,\infty)$ such that
\[
\|(\lambda+ A)^{-1}\|_{\La(X)} \leq g(\Real(\lambda)^{-1})\qquad (\lambda\in\C_{+}).
\]
Then for each $\gamma\in(\frac{1}{p}-\frac{1}{p'},\infty)$ there exists a $C_{\gamma}\geq0$ such that $\|T(t)\|_{\La(X_{\gamma},X)} \leq C_{\gamma} (g(t)+1)$ for all $t>0$. For $p=2$ one may let $\gamma=0$.
\end{proposition}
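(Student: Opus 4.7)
The plan is to apply Theorem~\ref{thm:abstract polynomial growth} with $Y := X_\gamma$, $Y_0 := X_\beta$ for some $\beta$ sufficiently large, and the pair $(p,q) = (p,p')$; this is legitimate since $p \in [1,2]$ gives $p' \geq p$. Hypothesis~\eqref{it:condabstract polynomial growth1} is immediate because $T(t)$ commutes with fractional powers of $\omega + A$, and~\eqref{it:condabstract polynomial growth2} follows from Proposition~\ref{prop:polynomialgrowthgeneral} once $\beta > 1$. The crucial observation is that $X_\gamma$ is isomorphic to $X$ as a Banach space via the fractional power $(\omega+A)^\gamma$, so $X_\gamma$ inherits Fourier type $p$ from $X$.

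The substantive step is to establish $\|(a+\ui\cdot+A)^{-1}\|_{\Ma_{p,p'}(\R;\La(X_\gamma,X))} \lesssim g(1/a)$. Writing $m_a(\xi):=(a+\ui\xi+A)^{-1}$, Fourier type~$p$ applied on both the input and output sides of $T_{m_a}(f) = \F^{-1}(m_a\widehat{f})$, combined with H\"older's inequality, gives
\[
\|T_{m_a}(f)\|_{L^{p'}(\R;X)} \lesssim \|m_a\|_{L^r(\R;\La(X_\gamma,X))}\,\|f\|_{L^p(\R;X_\gamma)}, \qquad \frac{1}{r} = \frac{1}{p}-\frac{1}{p'}.
\]
Through the isomorphism $X_\gamma \cong X$ one has $\|m_a(\xi)\|_{\La(X_\gamma,X)} \simeq \|(a+\ui\xi+A)^{-1}(\omega+A)^{-\gamma}\|_{\La(X)}$. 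I would iterate the resolvent identity $\lceil\gamma\rceil$ times to obtain decay $(1+|\xi|)^{-\lceil\gamma\rceil}$, using the uniform-in-$\xi$ bound $\|(a+\ui\xi+A)^{-1}\|_{\La(X)} \leq g(1/a)$ to absorb the resolvents that appear, and then invoke the moment inequality for the sectorial operator $\omega+A$ to interpolate down to
\[
\|m_a(\xi)\|_{\La(X_\gamma,X)} \lesssim \frac{g(1/a)}{(1+|\xi|)^\gamma}.
\]
The condition $\gamma > 1/p - 1/p' = 1/r$ is then precisely what makes this function lie in $L^r(\R)$, with norm $\lesssim g(1/a)$, and Theorem~\ref{thm:abstract polynomial growth} delivers the conclusion.

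For $p=2$ and $\gamma=0$, one has $1/p-1/p'=0$, but Fourier type~$2$ provides a genuine Plancherel-type isomorphism on $L^2(\R;X)$, so the H\"older step can be skipped entirely and $\|m_a\|_{\Ma_{2,2}(\R;\La(X))}$ is dominated directly by $\sup_\xi\|m_a(\xi)\|_{\La(X)}\leq g(1/a)$, just as in the proof of Theorem~\ref{thm:polynomialgrowthHS}. The main obstacle I anticipate is the fractional decay estimate: one needs the moment inequality to apply uniformly in $a$ and $\xi$, and it is precisely the fact that $g(1/a)$ controls $\|(a+\ui\xi+A)^{-1}\|$ \emph{independently} of $\xi$ that makes the interpolation close cleanly.
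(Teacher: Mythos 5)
Your proposal follows essentially the same route as the paper's proof: establish the pointwise decay estimate $\|(a+\ui\xi+A)^{-1}\|_{\La(X_{\gamma},X)}\lesssim g(1/a)(1+|\xi|)^{-\gamma}$, convert it into $\|(a+\ui\cdot+A)^{-1}\|_{\Ma_{p,p'}(\R;\La(X_{\gamma},X))}\lesssim g(1/a)$ via the Fourier-type bound with $\tfrac1r=\tfrac1p-\tfrac1{p'}$, and then apply Theorem \ref{thm:abstract polynomial growth} with $Y=X_{\gamma}$ and a higher fractional domain as $Y_{0}$, checking condition \eqref{it:condabstract polynomial growth2} through Proposition \ref{prop:polynomialgrowthgeneral} (the paper takes $Y_{0}=X_{2}$). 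The only difference is that the paper imports the two technical ingredients from the cited multiplier papers (the decay estimate from Proposition 3.4 of \cite{Rozendaal-Veraar18b}, applied to $A+a$ after noting via \eqref{eq:sectorial} that $A+a$ is sectorial with constant $\lesssim g(1/a)$, and the $L^{r}\Rightarrow\Ma_{p,p'}$ step from Proposition 3.9 of \cite{Rozendaal-Veraar18}), whereas you reprove them; your Fourier-type-plus-H\"older argument is exactly the proof of the latter, and it also covers $p=1$ and the Plancherel case $p=2$, $\gamma=0$, consistent with the paper.

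One caveat concerns your interpolation step. The moment inequality for $\omega+A$, i.e.\ $\|(\omega+A)^{\gamma}x\|\lesssim\|x\|^{1-\gamma/k}\|(\omega+A)^{k}x\|^{\gamma/k}$, points the wrong way for passing from bounds in $\La(X,X)$ and $\La(X_{k},X)$ to a bound in $\La(X_{\gamma},X)$: after using $\min(a,b)\leq a^{1-\theta}b^{\theta}$ you would need to dominate $\|x\|^{1-\gamma/k}\|(\omega+A)^{k}x\|^{\gamma/k}$ by $\|(\omega+A)^{\gamma}x\|$, which fails in general. The step is easily repaired: since $T:=(a+\ui\xi+A)^{-1}$ commutes with fractional powers of $B:=\omega+A$, apply the moment inequality to the bounded sectorial operator $B^{-1}$ at the point $Ty$, giving $\|B^{-\gamma}Ty\|\lesssim\|Ty\|^{1-\gamma/k}\|B^{-k}Ty\|^{\gamma/k}\lesssim g(1/a)(1+|\xi|)^{-\gamma}\|y\|$, where the second factor is controlled by your resolvent-identity iteration (alternatively, use the embedding $X_{\gamma}\hookrightarrow(X,X_{k})_{\gamma/k,\infty}$ together with real interpolation of operators). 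With this correction, and keeping the constants uniform in $a$ and $\xi$ as you indicate, your argument is complete and equivalent to the paper's.
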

\begin{proof}
The case where $p=1$ follows from Proposition \ref{prop:polynomialgrowthgeneral}.
Hence we may suppose that $\gamma\in[0,1)$, and we may also assume that $g(s)>c$ for all $s>0$ and some $c>0$.  Then \eqref{eq:assTM} yields
\[
\sup_{\lambda>2\w}\lambda\|(\lambda+A+a)^{-1}\|_{\La(X)}\leq 2M_{\w}\leq 2 c^{-1}M_{\w}g(1/a)\qquad(a>0).
\]
Hence $A+a$ is an injective sectorial operator, and for $\theta\in(0,\pi)$ large enough there exists a $C_{1}\geq0$ independent of $a$ such that
\[
\sup_{\lambda\notin\overline{\Se_{\theta}}}\|\lambda R(\lambda,A+a)\|_{\La(X)}\leq C_{1}\sup_{\lambda>0}\|\lambda(\lambda+A+a)^{-1}\|_{\La(X)}\leq 2C_{1}(c^{-1}M_{\w}+\w)g(1/a),
\]
by \eqref{eq:sectorial}. It now follows from the proof of \cite[Proposition 3.4]{Rozendaal-Veraar18b} applied to the operator $A+a$, by keeping track of the relevant constants, that
\[
\|(a+\ui\xi + A)^{-1}\|_{\calL(X_{\gamma},X)}\leq C_{2} (1+|\xi|)^{-\gamma} g(1/a)\qquad(\xi\in\R)
\]
for some $C_{2}\geq0$. Hence \cite[Proposition 3.9]{Rozendaal-Veraar18} yields constants $C_{3},C_{4}\geq0$ such that, for $r\in[1,\infty]$ such that $\frac1r =  \frac1p-\frac1{p'}$ (here one can allow $\gamma=\frac{1}{p}-\frac{1}{p'}=0$ for $p=2$),
\[
\|(a+\ui\cdot + A)^{-1}\|_{\Ma_{p,p'}(\R;\calL(X_{\gamma},X))} \leq C_{3} \|(a+\ui\cdot + A)^{-1}\|_{L^r(\R;\calL(X_{\gamma},X))} \leq C_{4} g(1/a).
\]
Now let $Y:=X_{\gamma}$ and $Y_{0}:=X_{2}$ in Theorem \ref{thm:abstract polynomial growth}, using Proposition \ref{prop:polynomialgrowthgeneral}.
\end{proof}

A similar result holds under type and cotype assumptions on the underlying space, and $R$-boundedness assumptions on the resolvent. Let $(r_{k})_{k\in\N}$ be a sequence of independent real Rademacher variables on some probability space. Let $X$ and $Y$ be Banach spaces and $\mathcal{T}\subseteq \La(X,Y)$. We say that $\mathcal{T}$ is \emph{$R$-bounded} if there exists a constant $C\geq0$ such that for all $n\in\N$, $T_{1},\ldots, T_{n}\in\mathcal{T}$ and $x_{1},\ldots, x_{n}\in X$ one has
\[
\Big(\mathbb{E}\Big\|\sum_{k=1}^{n}r_{k}T_{k}x_{k}\Big\|_{Y}^{2}\Big)^{1/2}\leq
C\Big(\mathbb{E}\Big\|\sum_{k=1}^{n}r_{k}x_{k}\Big\|_{X}^{2}\Big)^{1/2}\!.
\]
The smallest such $C$ is the \emph{$R$-bound} of $\mathcal{T}$ and is denoted by $R(\mathcal{T})$. When we want to specify the underlying spaces $X$ and $Y$ we write $R_{X,Y}(\mathcal{T})$ for the $R$-bound of $\mathcal{T}$, and we write $R_{X}(\mathcal{T}):=R_{X,Y}(\mathcal{T})$ if $X=Y$.

For the definitions of and background on type and cotype we refer to \cite{DiJaTo95,HyNeVeWe2}, and for $p$-convexity and $q$-concavity of Banach lattices see \cite{Lindenstrauss-Tzafriri79}. Note that $X=L^{u}(\Omega)$, for $u\in[1,\infty)$ and $\Omega$ a measure space, has type $p=\min(u,2)$ and cotype $q=\max(2,u)$ and is $u$-convex and $u$-concave. For such $X$ the first statement of the following proposition yields the same conclusion as Proposition \ref{prop:polynomialgrowthFouriertype}.

\begin{proposition}\label{prop:polynomialgrowthcotype}
Let $-A$ be the generator of a $C_{0}$-semigroup $(T(t))_{t\geq0}$ on a Banach space $X$ with type $p\in[1,2]$ and cotype $q\in[2,\infty)$ such that $\C_-\subseteq\rho(A)$. Suppose that there exists a nondecreasing $g:(0,\infty)\to (0,\infty)$ such that
\[
\|(\lambda+ A)^{-1}\|_{\La(X)} \leq g(\Real(\lambda)^{-1})\qquad (\lambda\in\C_{+}).
\]
Then for each $\gamma\in(\frac{2}{p}-\frac{2}{q},\infty)$ there exists a $C_{\gamma}\geq0$ such that $\|T(t)\|_{\La(X_{\gamma},X)} \leq C_{\gamma} (g(t)+1)$ for all $t> 0$. If
\[
R_{X}(\{(a+\ui\xi + A)^{-1}\mid \xi\in\R\})\leq g(1/a)\qquad (a\in(0,\infty)),
\]
then one may let $\gamma\in(\frac{1}{p}-\frac{1}{q},\infty)$. If in addition $X$ is a $p$-convex and $q$-concave Banach lattice then one may let $\gamma=\frac{1}{p}-\frac{1}{q}$.
\end{proposition}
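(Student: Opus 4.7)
The plan is to adapt the argument of Proposition \ref{prop:polynomialgrowthFouriertype}, replacing the Fourier type Fourier multiplier theorem by variants tailored to type $p$, cotype $q$ spaces (with or without $R$-boundedness or lattice structure). I may assume $\gamma\in[0,1)$, since for $\gamma\geq 1$ the conclusion already follows from Proposition \ref{prop:polynomialgrowthgeneral}; I may also assume $g\geq c>0$ for some $c>0$. Exactly as in the proof of Proposition \ref{prop:polynomialgrowthFouriertype}, combining \eqref{eq:assTM} with the resolvent hypothesis gives an angle $\theta\in(0,\pi)$, independent of $a$, such that $A+a$ is sectorial with
\[
\sup_{\lambda\notin\overline{\Se_{\theta}}}\|\lambda R(\lambda,A+a)\|_{\La(X)}\leq C_{0}g(1/a)\qquad(a>0),
\]
and a functional-calculus computation as in \cite[Proposition 3.4]{Rozendaal-Veraar18b} then yields the smoothing estimate
\[
\|(a+\ui\xi+A)^{-1}\|_{\La(X_{\gamma},X)}\leq C_{1}(1+\abs{\xi})^{-\gamma} g(1/a)\qquad(a>0,\;\xi\in\R),
\]
with $C_{1}\geq 0$ independent of $a$.

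The heart of the argument is then converting this pointwise (or $R$-bounded) decay into a Fourier multiplier bound of the form required by Theorem \ref{thm:abstract polynomial growth}. For the first statement I would invoke a type $p$, cotype $q$ Fourier multiplier theorem analogous to \cite[Proposition 3.9]{Rozendaal-Veraar18} but for non-$R$-bounded symbols, which converts pointwise decay $(1+\abs{\xi})^{-\gamma}$ with $\gamma>\frac{2}{p}-\frac{2}{q}$ into an $\Ma_{p,p'}$-bound
\[
\|(a+\ui\cdot+A)^{-1}\|_{\Ma_{p,p'}(\R;\La(X_{\gamma},X))}\leq C_{2}g(1/a).
\]
For the second statement, the hypothesised $R$-boundedness of $\{(a+\ui\xi+A)^{-1}\mid \xi\in\R\}\subseteq\La(X)$, combined with Kahane's contraction principle applied to the scalar factor $(1+\abs{\xi})^{-\gamma}$ arising from the smoothing into $\La(X_{\gamma},X)$, yields $R$-boundedness of the same family viewed in $\La(X_{\gamma},X)$ with bound proportional to $g(1/a)$; a sharper $R$-bounded type/cotype multiplier theorem then requires only the loss $\gamma>\frac{1}{p}-\frac{1}{q}$ and gives the same $\Ma_{p,p'}$-bound. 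The third statement replaces this last step by the lattice version of the multiplier theorem from \cite{Rozendaal-Veraar18}, which saturates the endpoint $\gamma=\frac{1}{p}-\frac{1}{q}$ when $X$ is $p$-convex and $q$-concave.

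In all three cases the proof is concluded by applying Theorem \ref{thm:abstract polynomial growth} with $Y:=X_{\gamma}$ and $Y_{0}:=X_{2}$, where condition \eqref{it:condabstract polynomial growth1} follows from $X_{\gamma}$-invariance of the semigroup and the commutation of $T(t)$ with fractional powers of $\w+A$, while condition \eqref{it:condabstract polynomial growth2} is provided by Proposition \ref{prop:polynomialgrowthgeneral}. The main obstacle is identifying and invoking the correct three Fourier multiplier theorems from \cite{Rozendaal-Veraar17,Rozendaal-Veraar18} (in particular the non-$R$-bounded type/cotype version, which inherently loses a full $\frac{2}{p}-\frac{2}{q}$ derivatives) and carefully tracking the \emph{linear} dependence of the resulting multiplier norm on $g(1/a)$; once these ingredients are secured, the remainder of the argument is a direct transcription of the Fourier type case.
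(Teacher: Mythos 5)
Your overall architecture coincides with the paper's: reduce to $\gamma\in(0,1)$, use sectoriality of $A+a$ and the estimates behind \cite[Proposition 3.4]{Rozendaal-Veraar18b} to gain a factor $(1+|\xi|)^{-\gamma}$ in $\La(X_{\gamma},X)$, convert this into a Fourier multiplier bound via the type/cotype theorems of \cite{Rozendaal-Veraar18}, and finish with Theorem \ref{thm:abstract polynomial growth} applied to $Y=X_{\gamma}$, $Y_{0}=X_{2}$ (using Proposition \ref{prop:polynomialgrowthgeneral} for condition (2)). For the second and third statements this is essentially the paper's proof: one upgrades the hypothesis to $R_{X_{\gamma},X}(\{(1+|\xi|)^{\gamma}(a+i\xi+A)^{-1}\mid\xi\in\R\})\leq C\,g(1/a)$ and applies \cite[Theorems 3.18 and 3.21]{Rozendaal-Veraar18}. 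Three smaller corrections there. First, the multiplier bound you obtain lands in $\Ma_{p,q}$, not $\Ma_{p,p'}$; e.g.\ for $X=L^{u}(\Omega)$ with $u>2$ one has $q=u>2=p'$ and no $L^{p}\to L^{p'}$ bound is available (this is harmless, since Theorem \ref{thm:abstract polynomial growth} allows any $q\in[p,\infty]$). Second, passing from the $R$-bound in $\La(X)$ to the weighted $R$-bound in $\La(X_{\gamma},X)$ is not just Kahane's contraction principle applied to the scalar factor $(1+|\xi|)^{-\gamma}$: one must $R$-bound the family $(1+|\xi|)^{\gamma}(a+i\xi+A)^{-1}(1+a+A)^{-\gamma}$ in $\La(X)$, which uses the representation of the fractional power and the resolvent estimates inside \cite[Proposition 3.4]{Rozendaal-Veraar18b}, as the paper indicates. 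Third, your reduction to $\gamma\in[0,1)$ should account for the endpoint $\gamma=\frac1p-\frac1q=0$ of the lattice statement (the case $p=q=2$), which the paper settles by Kwapie\'n's theorem \cite{Kwapien72}: a $2$-convex, $2$-concave Banach lattice is isomorphic to a Hilbert space.

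The genuine gap is in the first statement. You invoke a ``type $p$, cotype $q$ multiplier theorem for non-$R$-bounded symbols losing $\frac2p-\frac2q$ derivatives'' as a black box, and you yourself flag this as the main obstacle; no such off-the-shelf result is what the paper uses, and this is exactly the step that needs an argument. The paper's idea is to split the available decay in half: it shows that $f_{a}(\xi):=(1+|\xi|)^{\gamma/2}(a+i\xi+A)^{-1}$ lies in $W^{1,r}(\R;\La(X_{\gamma},X))$ with norm at most $C\,g(1/a)$, where $\frac1r=\frac1p-\frac1q$, and then applies \cite[Lemma 2.1]{Rozendaal-Veraar18b}, which converts this Sobolev regularity into the $R$-bound $R_{X_{\gamma},X}(\{(1+|\xi|)^{\gamma/2}(a+i\xi+A)^{-1}\mid\xi\in\R\})\leq C\,g(1/a)$. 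The remaining factor $(1+|\xi|)^{-\gamma/2}$, with $\gamma/2>\frac1p-\frac1q$, is then fed into the same $R$-bounded multiplier theorems \cite[Theorems 3.18 and 3.21]{Rozendaal-Veraar18} as in the second statement. Without this decay-splitting together with the $W^{1,r}$-implies-$R$-boundedness lemma (or an equivalent substitute that you actually prove or cite precisely), your treatment of the range $\gamma\in(\frac2p-\frac2q,\infty)$ is incomplete.
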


One could also let $q=\infty$ in the first two statements in this proposition. However, then Proposition \ref{prop:polynomialgrowthgeneral} yields a stronger statement, since any Banach space has type $p=1$ and cotype $q=\infty$, and because a Banach space that does not have finite cotype also does not have nontrivial type.

\begin{proof}
We may suppose that $\gamma\in(0,1)$, by Proposition \ref{prop:polynomialgrowthgeneral} and because each $2$-convex and $2$-concave Banach lattice is isomorphic to a Hilbert space, by \cite{Kwapien72}. We may also suppose that $g(s)>c$ for all $s>0$ and some $c>0$. We first prove the final two statements. 

As in the proof of Proposition \ref{prop:polynomialgrowthFouriertype}, it suffices to check the multiplier condition in Theorem \ref{thm:abstract polynomial growth}. Moreover, again using estimates in the proof of \cite[Proposition 3.4]{Rozendaal-Veraar18b} and proceeding as in the proof of Proposition \ref{prop:polynomialgrowthFouriertype}, one obtains a $C_{1}\geq0$ such that
\[
R_{X_{\gamma},X}(\{(1+|\xi|)^{\gamma} (a+\ui\xi + A)^{-1}\mid \xi\in \R\})\leq C_{1} g(1/a)\qquad(a>0).
\]
Now \cite[Theorems 3.18 and 3.21]{Rozendaal-Veraar18} yield a $C_{2}\geq0$ such that
\[
\|(a+\ui\cdot + A)^{-1}\|_{\Ma_{p,q}(\R;\calL(X_{\gamma},X))} \leq C_{2} g(1/a)\qquad(a>0),
\]
which proves the final two statements.

For the first statement we may assume that $\frac{2}{p}-\frac{2}{q}<1$ and show that for each $\gamma\in(\frac{2}{p}-\frac{2}{q},1)$ there exists a $C_{3}\geq0$ such that
\begin{equation}\label{eq:Rbound}
R_{X_{\gamma},X}(\{(1+|\xi|)^{\gamma/2} (a+\ui\xi + A)^{-1}\mid \xi\in \R\})\leq C_{3} g(1/a)\qquad(a>0),
\end{equation}
after which one proceeds as before. To obtain \eqref{eq:Rbound} let $r\in[1,\infty]$ be such that $\frac{1}{r}=\frac{1}{p}-\frac{1}{q}$, and set $f_{a}(\xi):=(1+|\xi|)^{\gamma/2}(a+\ui\xi+A)^{-1}$ for $\xi\in\R$. Then $f_{a}\in W^{1,r}(\R;\La(X_{\gamma},X))$ by \cite[Proposition 3.4]{Rozendaal-Veraar18b}, with
\[
\|f_{a}\|_{W^{1,r}(\R;\La(X_{\gamma},X))}\leq C_{4}g(1/a)
\]
for some $C_{4}\geq0$ independent of $a$. Now \cite[Lemma 2.1]{Rozendaal-Veraar18b} yields \eqref{eq:Rbound}.
\end{proof}

It follows from an example due to Arendt (see \cite[Example 5.1.11]{ArBaHiNe11} or \cite[Section 4]{Weis-Wrobel96}) that, already in the case where $g$ is constant, the indices $\frac{1}{p}-\frac{1}{p'}$ and $\frac{1}{p}-\frac{1}{q}$ in Propositions \ref{prop:polynomialgrowthFouriertype} and \ref{prop:polynomialgrowthcotype} cannot be improved. We do not know whether it is in general possible to let $\gamma=\frac{1}{p}-\frac{1}{p'}$ or $\gamma=\frac{1}{p}-\frac{1}{q}$ in these results.

\subsection{Necessary conditions}

Here we provide a converse to Theorem \ref{thm:abstract polynomial growth}, extending \cite[Theorem 2.1]{Eisner-Zwart07}. For simplicity we restrict to semigroups of polynomial growth and to fractional domains, but from the proof one can derive an analogous statement for more general semigroups and more general continuously embedded spaces.

\begin{theorem}\label{thm:abstractconverse}
Let $-A$ be the generator of a $C_{0}$-semigroup $(T(t))_{t\geq0}$ on a Banach space $X$. Let $\gamma\in[0,\infty)$. Suppose that there exist $\alpha,C\geq0$ such that $\|T(t)\|_{\La(X_{\gamma},X)}\leq C(t^{\alpha}+1)$ for all $t\geq0$. 
Then $\C_{-}\subseteq\rho(A)$ and for all $p\in[1,\infty]$, $q\in[p,\infty]$, and $r\in[1,\infty]$ such that $\frac{1}{p}-\frac{1}{q}=1-\frac{1}{r}$, we have
\begin{align}\label{eq:converse1}
\|(a+i\cdot+A)^{-1}\|_{\Ma_{p,q}(\R;\La(X_{\gamma},X))}\leq C(C_{r}a^{-\alpha-\frac{1}{r}}+C'_{r}a^{-\frac{1}{r}})\quad (a\in(0,\infty)),
\end{align}
where $C_{r}=r^{-\alpha-\frac{1}{r}}\Gamma(\alpha+1)^{\frac{1}{r}}$ and $C'_{r}=r^{-1/r}$ for $r<\infty$, and $C_{\infty}=e^{-\alpha}\alpha^{\alpha}$ and $C'_{\infty}=1$. Moreover,
\begin{equation}\label{eq:converse2}
\begin{aligned}
\sup\{\|(a+i\xi+A)^{-1}\|_{\La(X_{\gamma},X)}\mid \xi\in \R\} & \leq R_{X_{\gamma},X}(\{(a+i\xi+A)^{-1}\mid \xi\in \R\})
\\ & \leq C (\Gamma(\alpha+1)a^{-\alpha-1}+a^{-1}).
\end{aligned}
\end{equation}
\end{theorem}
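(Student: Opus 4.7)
My plan centers on representing the resolvent as a Laplace transform. For $\lambda = a + \ui\xi \in \C_+$ and $y \in X_\gamma$, the polynomial bound makes $t \mapsto \ue^{-\lambda t} T(t) y$ absolutely integrable, so $R(\lambda) y := \int_0^\infty \ue^{-\lambda t} T(t) y \ud t$ defines a bounded operator in $\La(X_\gamma, X)$ with $\|R(\lambda)\|_{\La(X_\gamma,X)} \leq C(\Gamma(\alpha+1) a^{-\alpha-1} + a^{-1})$. An integration by parts (using $\ue^{-\lambda t} T(t) y \to 0$ as $t \to \infty$, by the polynomial bound and $\Real(\lambda) > 0$) shows that $(\lambda + A) R(\lambda) y = y$ for $y \in X_\gamma$ and $R(\lambda)(\lambda + A) x = x$ for $x$ in the dense subspace $\bigcap_n \D(A^n)$ of $\D(A)$.

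To conclude $\C_- \subseteq \rho(A)$, I would rule out spectrum of $-A$ in $\C_+$ case by case. Any eigenvector $x$ of $-A$ at $\lambda$ automatically lies in $\bigcap_n \D(A^n) \subseteq X_\gamma$ and satisfies $T(t)x = \ue^{\lambda t}x$, so the polynomial bound forces $\ue^{\Real(\lambda) t}\|x\|_X \leq C(t^\alpha+1)\|x\|_{X_\gamma}$, hence $x = 0$. Residual spectrum is handled dually: if $A^* x^* = -\lambda x^*$ for nonzero $x^* \in X^*$, then $\langle x^*, T(t) y \rangle = \ue^{\lambda t}\langle x^*, y\rangle$, and testing against $y \in X_\gamma$ forces $\langle x^*, y\rangle = 0$ on the dense $X_\gamma$. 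The continuous-spectrum case requires a closed-range argument using the boundedness of $R(\lambda)$ in $\La(X_\gamma, X)$. With $\C_- \subseteq \rho(A)$ established, $R(\lambda)$ coincides with $(\lambda + A)^{-1}|_{X_\gamma}$, and the map $\xi \mapsto (a + \ui\xi + A)^{-1}$ is the Fourier transform of the kernel $K(t) := \ue^{-at} T(t) \ind_{[0,\infty)}(t) \in \La(X_\gamma, X)$.

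For the multiplier bound \eqref{eq:converse1}, the Fourier multiplier associated to $(a + \ui\cdot + A)^{-1}$ acts by convolution with $K$, so Young's inequality gives $\|T_m\|_{\La(L^p(\R;X_\gamma), L^q(\R;X))} \leq \|K\|_{L^r(\R; \La(X_\gamma, X))}$ whenever $\frac{1}{p} - \frac{1}{q} = 1 - \frac{1}{r}$. Since $\|K(t)\|_{\La(X_\gamma, X)} \leq C\ue^{-at}(t^\alpha + 1)$, this reduces to computing $\|\ue^{-at} t^\alpha\|_{L^r(0,\infty)}$ and $\|\ue^{-at}\|_{L^r(0,\infty)}$, which are standard Gamma-function integrals producing the constants $C_r, C'_r$; the $r = \infty$ case is handled by the maximum of $\ue^{-at}t^\alpha$ at $t = \alpha/a$.

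Finally, for the $R$-bound \eqref{eq:converse2}, the same representation yields
\[
\sum_j r_j (a + \ui\xi_j + A)^{-1} x_j = \int_0^\infty \ue^{-at} T(t)\Big(\sum_j r_j \ue^{-\ui\xi_j t} x_j\Big) \ud t
\]
for Rademacher variables $r_j$, $\xi_j \in \R$, and $x_j \in X_\gamma$. Taking $L^2$-norms in the $r_j$, applying Minkowski's integral inequality, the polynomial bound on $\|T(t)\|_{\La(X_\gamma, X)}$, and Kahane's contraction principle to absorb the unimodular factors $\ue^{-\ui\xi_j t}$, yields the $R$-bound with constant $\int_0^\infty \ue^{-at}(t^\alpha + 1) \ud t = \Gamma(\alpha + 1) a^{-\alpha - 1} + a^{-1}$. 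The first inequality in \eqref{eq:converse2} is immediate from the definition of the $R$-bound. The main obstacle I anticipate is rigorously handling the continuous-spectrum case to establish $\C_- \subseteq \rho(A)$, since the polynomial bound only controls $R(\lambda)$ from $X_\gamma$ (not $X$) into $X$.
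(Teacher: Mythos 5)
The bulk of your argument coincides with the paper's: the resolvent is represented as the Laplace transform of $t\mapsto \ue^{-at}T(t)$, the bound \eqref{eq:converse1} follows from Young's inequality for operator-valued kernels together with the Gamma-function computation of $\|\ue^{-a\cdot}(\cdot^{\alpha}+1)\|_{L^{r}(0,\infty)}$, and the $R$-bound \eqref{eq:converse2} from the same kernel representation (the paper cites \cite[Corollary 2.17]{Kunstmann-Weis04} where you randomize directly; your route is fine, though the contraction principle with the complex unimodular scalars $\ue^{-\ui\xi_{j}t}$ costs an absolute factor, so you obtain \eqref{eq:converse2} only up to such a factor rather than with the stated constants). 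The genuine gap is the first conclusion, $\C_{-}\subseteq\rho(A)$, which you flag yourself but do not close, and on which everything else depends: the identification of $\xi\mapsto(a+\ui\xi+A)^{-1}$ with the Fourier transform of $K(t)=\ue^{-at}T(t)\mathbf{1}_{[0,\infty)}(t)$ presupposes it. Your trichotomy disposes of eigenvalues of $-A$ in $\C_{+}$ and of adjoint eigenvalues (residual spectrum), but the remaining case is precisely the crux: one must exclude \emph{approximate} eigenvalues of $-A$ in $\C_{+}$, and a hand-wave towards ``a closed-range argument'' does not do this, since the hypothesis only controls $T(t)$ from $X_{\gamma}$ into $X$ and approximate eigenvectors need not be bounded in $X_{\gamma}$.

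This case can be handled, but it needs an actual argument, for instance by regularization: if $\|x_{n}\|_{X}=1$, $x_{n}\in\D(A)$ and $(\lambda+A)x_{n}\to0$ with $\Real(\lambda)>0$, set $w_{n}:=(\w+A)^{-k}x_{n}$ for an integer $k\geq\gamma$ and $\w$ large. The resolvent identity gives $(\lambda+A)w_{n}\to 0$, $\|w_{n}\|_{X}\to|\w-\lambda|^{-k}>0$, and $(w_{n})_{n}$ bounded in $X_{\gamma}$; then the identity $T(t)w_{n}-\ue^{\lambda t}w_{n}=-\int_{0}^{t}\ue^{\lambda(t-s)}T(s)(\lambda+A)w_{n}\ud s$ (valid in $X$, with the integrand controlled by the exponential bound of $T$ on $X$) yields, letting $n\to\infty$ for fixed $t$, the estimate $\ue^{t\Real(\lambda)}|\w-\lambda|^{-k}\leq C(t^{\alpha}+1)\sup_{n}\|w_{n}\|_{X_{\gamma}}$, a contradiction for large $t$. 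Since $\rho(-A)$ contains a right half-plane and boundary points of $\sigma(-A)$ are approximate eigenvalues, excluding approximate eigenvalues in $\C_{+}$ indeed gives $\sigma(-A)\cap\C_{+}=\emptyset$, i.e.\ $\C_{-}\subseteq\rho(A)$. The paper avoids all of this by obtaining $\C_{-}\subseteq\rho(A)$ through rescaling from \cite[Proposition 4.19]{Rozendaal-Veraar18b}; either cite such a result or include an argument along the above lines, but as written your proposal leaves the key spectral inclusion unproved.
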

\begin{proof}
It follows by rescaling from \cite[Proposition 4.19]{Rozendaal-Veraar18b} that $\C_{-}\subseteq\rho(A)$. We claim
\begin{equation}\label{eq:rnormsemigroup}
\|e^{-a\cdot}\|T(\cdot)\|_{\La(X_{\gamma},X)}\|_{L^{r}(0,\infty)}\leq C(C_{r}a^{-\alpha-\frac{1}{r}}+C'_{r}a^{-\frac{1}{r}})\qquad (a\in(0,\infty)).
\end{equation}
To prove this claim, first consider $r<\infty$. Then
\begin{align}
\nonumber &\|e^{-a\cdot}\|T(\cdot)\|_{\La(X_{\gamma},X)}\|_{L^{r}(0,\infty)}\leq C\Big(\int_{0}^{\infty}e^{-art}(t^{\alpha}+1)^{r}\ud t\Big)^{\frac{1}{r}}\\
&
\label{eq:hulprT}
\leq C\Big(\Big(\int_{0}^{\infty}e^{-art}t^{r\alpha}\ud t\Big)^{\frac{1}{r}}\!+\Big(\int_{0}^{\infty}e^{-art}\ud t\Big)^{\frac{1}{r}}\,\Big)\\
&\leq C\Big((ar)^{-\alpha-\frac{1}{r}}\Big(\int_{0}^{\infty}e^{-t}t^{\alpha}\ud t\Big)^{\frac{1}{r}}\!+(ar)^{-\frac{1}{r}}\Big)=C(C_{r}a^{-\alpha-\frac{1}{r}}+C'_{r}a^{-\frac{1}{r}}).
\nonumber
\end{align}
On the other hand, for $r=\infty$ a simple optimization argument shows that
\[
\sup_{t\geq0}e^{-at}\|T(t)\|_{\La(X_{\gamma},X)}\leq C(\sup_{t\geq0}e^{-at}t^{\alpha}+1)=C\big(e^{-\alpha}\alpha^{\alpha}a^{-\alpha}+1\big).
\]

Now set $m_{a}(\xi):=(a+i\xi+A)^{-1}$ for $a>0$ and $\xi\in\R$. For $r<\infty$ let $f\in\Sw(\R)\otimes X$, and for $r=\infty$ let $f$ be an $X$-valued simple function. Note that $e^{-a\cdot}\|T(\cdot)\|_{\La(X_{\gamma},X)}\in L^{1}(\R)$. It then follows in a straightforward manner (see \cite[Lemma 3.1]{Rozendaal-Veraar18b}) that
\[
(a+i\xi+A)^{-1}x=\int_{0}^{\infty}e^{-t(a+i\xi)}T(t)x\ud t\qquad(x\in X_{\gamma}, \xi\in\R)
\]
and
\[
T_{m_{a}}(f)=\int_{0}^{\infty}e^{-as}T(s)f(t-s)\ud s\qquad(t\in\R).
\]
The latter equality, \eqref{eq:rnormsemigroup} and Young's inequality for operator-valued kernels \cite[Proposition 1.3.5]{ArBaHiNe11} yield \eqref{eq:converse1}. On the other hand, applying \cite[Corollary 2.17]{Kunstmann-Weis04} and \eqref{eq:hulprT} with $r=1$ to $t\mapsto e^{-at} T(t)$ yields \eqref{eq:converse2}.
\end{proof}

For $-A$ a standard $n\times n$ Jordan block acting on $X=\R^{n}$, $n\geq2$, there exists a $C\geq0$ such that
\[
C^{-1}(t^{n-1}+1)\leq \|T(t)\|_{\La(X)}\leq C(t^{n-1}+1)\qquad(t\geq0)
\]
and
\[
\|(a+\ui\xi+A)^{-1}\|_{\La(X)}\leq \|(a+A)^{-1}\|\leq C (a^{-n} + a^{-1})\qquad (a>0, \xi\in\R).
\]
This shows that \eqref{eq:converse2} is optimal. Note that in this case $R$-boundedness and uniform boundedness coincide since $X$ is a Hilbert space.

\begin{remark}\label{rem:Rbounded}
One might be tempted to think that the more restrictive $R$-bounded analogue of \eqref{eq:R} which appears in \eqref{eq:converse2}, namely
\[
R_{X}(\{(a+i\xi+A)^{-1}\mid \xi\in \R\})\leq g(1/a)\qquad(a\in(0,\infty)),
\]
can be used to extend the conclusion of Theorem \ref{thm:main} to more general Banach spaces. However, the example at the end of Section \ref{subs:possemi} shows that this is not the case for certain positive semigroups on $L^p(\Omega)\cap L^q(\Omega)$, for $\Omega$ a measure space.
\end{remark}

Theorems \ref{thm:abstract polynomial growth} and \ref{thm:abstractconverse} combine to yield the following characterization of polynomially growing semigroups on fractional domains.

\begin{corollary}\label{cor:abstractchar}
Let $-A$ be the generator of a $C_{0}$-semigroup $(T(t))_{t\geq0}$ on a Banach space $X$ such that $\C_{-}\subseteq\rho(A)$, and let $\alpha,\gamma\in[0,\infty)$. Then the following conditions are equivalent:
\begin{enumerate}
\item\label{it:char1} there exists a $C\geq0$ such that $\|T(t)\|_{\La(X_{\gamma},X)}\leq C(t^{\alpha}+1)$ for all $t\geq0$;
\item\label{it:char2} there exist $p,q\in[1,\infty]$ and a $C'\geq0$ such that
\begin{equation}\label{eq:chareq}
\|(a+i\cdot+A)^{-1}\|_{\Ma_{p,q}(\R;\La(X_{\gamma},X))}\leq C'(a^{-\alpha}+1)\qquad(a\in(0,\infty)).
\end{equation}
\end{enumerate}
\end{corollary}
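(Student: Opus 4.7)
Both implications follow by combining Theorems~\ref{thm:abstract polynomial growth} and~\ref{thm:abstractconverse}, both applied with $Y := X_{\gamma}$.

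For $(1) \Rightarrow (2)$ I would invoke Theorem~\ref{thm:abstractconverse} with $p = q$, which forces $r = \infty$ in the constraint $\tfrac{1}{p}-\tfrac{1}{q} = 1 - \tfrac{1}{r}$. Under this choice, the estimate \eqref{eq:converse1} specialises to
\[
\|(a+i\cdot+A)^{-1}\|_{\Ma_{p,p}(\R;\La(X_{\gamma},X))} \leq C\bigl(e^{-\alpha}\alpha^{\alpha}a^{-\alpha} + 1\bigr) \qquad (a > 0),
\]
which, after absorbing constants, is of the form required by \eqref{eq:chareq} and verifies \eqref{it:char2} for any $p = q \in [1,\infty]$.

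For $(2) \Rightarrow (1)$ I would apply Theorem~\ref{thm:abstract polynomial growth} with $Y := X_{\gamma}$, $Y_{0} := X_{\gamma+\sigma}$ for some fixed $\sigma > 1$, and $g(s) := C'(s^{\alpha}+1)$, so that the multiplier bound \eqref{eq:pq norm} coincides with \eqref{eq:chareq}. Hypothesis~\eqref{it:condabstract polynomial growth1} is immediate, since $T(t)$ commutes with the fractional power $(\omega+A)^{\gamma}$, whence $T(t)$ preserves $X_{\gamma}$ with $\|T(t)\|_{\La(X_{\gamma})} \leq C_{T}\|T(t)\|_{\La(X)}$. To verify hypothesis~\eqref{it:condabstract polynomial growth2} I would first extract from \eqref{eq:chareq} the pointwise resolvent estimate $\|(a+i\xi+A)^{-1}\|_{\La(X_{\gamma},X)} \leq C'(a^{-\alpha}+1)$ for $\xi \in \R$ and $a > 0$; when $p = q$ this is the standard inequality $\|m\|_{L^{\infty}} \leq \|m\|_{\Ma_{p,p}}$ for operator-valued Fourier multipliers, obtained by testing on $f(t) = \phi(t)\otimes y$ with $y \in X_{\gamma}$. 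Then I would repeat the half-plane functional calculus argument from the proof of Proposition~\ref{prop:polynomialgrowthgeneral} with the substitution $y := (1+A)^{\sigma} x \in X_{\gamma}$ for $x \in X_{\gamma+\sigma}$, so that $\sigma > 1$ takes the role of $\gamma$ in that proof, to obtain
\[
\|T(t)\|_{\La(X_{\gamma+\sigma},X)} \leq \tilde{C}(t^{\alpha}+1) \qquad (t>0).
\]
This polynomial bound forces $[t \mapsto e^{-at}\|T(t)\|_{\La(X_{\gamma+\sigma},X)}] \in L^{1}(0,\infty)$ for every $a > 0$, verifying~\eqref{it:condabstract polynomial growth2}, and Theorem~\ref{thm:abstract polynomial growth} then delivers \eqref{it:char1}.

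The main obstacle is the extraction of the pointwise resolvent bound from \eqref{eq:chareq} in the regime $p < q$, where the standard $L^{\infty}$-inequality for multipliers does not apply. To close this gap I would either exploit the holomorphic extension of $(a+i\xi+A)^{-1}$ to the complex strip $\{|\Imag(\zeta)| < a\}$, combining a Cauchy integral formula on small circles around $i\R$ with the $\Ma_{p,q}$-bound to recover a pointwise estimate up to a harmless constant, or alternatively apply Theorem~\ref{thm:abstract polynomial growth} directly on the larger space $X_{\gamma+\sigma}$ to produce the polynomial growth bound needed to verify~\eqref{it:condabstract polynomial growth2}, after which the proof proceeds as above.
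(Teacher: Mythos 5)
Your proof of the implication \eqref{it:char1}$\Rightarrow$\eqref{it:char2} does not work as written. In Theorem \ref{thm:abstractconverse} the exponent $r$ is tied to $p,q$ by $\frac1p-\frac1q=1-\frac1r$, so the choice $p=q$ forces $r=1$, not $r=\infty$; with $r=1$ the estimate \eqref{eq:converse1} only gives $\|(a+i\cdot+A)^{-1}\|_{\Ma_{p,p}(\R;\La(X_\gamma,X))}\leq C(\Gamma(\alpha+1)a^{-\alpha-1}+a^{-1})$, which is one power of $a$ worse than \eqref{eq:chareq}. This loss is not an artifact of the theorem: since $\|m\|_{\Ma_{p,p}}\geq\sup_{\xi}\|m(\xi)\|$ (as recalled in the proof of Proposition \ref{prop:positivekernel}), the $n\times n$ Jordan block discussed after Theorem \ref{thm:abstractconverse} satisfies \eqref{it:char1} with $\gamma=0$ and $\alpha=n-1$, yet $\|(a+A)^{-1}\|\asymp a^{-n}=a^{-\alpha-1}$ as $a\to0$, so \eqref{eq:chareq} with $p=q$ and the same $\alpha$ is simply false. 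The constants $C_\infty=e^{-\alpha}\alpha^{\alpha}$, $C'_\infty=1$ that you quote belong to $r=\infty$, i.e.\ to the choice $p=1$, $q=\infty$; that is precisely what the paper does, and with that choice the implication is immediate. The repair is a one-line change, but as written your argument mis-solves the constraint and asserts an intermediate estimate that cannot hold.

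For \eqref{it:char2}$\Rightarrow$\eqref{it:char1} you follow the same route as the paper, which simply cites Theorem \ref{thm:abstract polynomial growth}; your attempt to actually verify its hypothesis \eqref{it:condabstract polynomial growth2} is more detailed and is sound in the case $p=q$: there the multiplier bound dominates $\sup_{\xi}\|(a+i\xi+A)^{-1}\|_{\La(X_\gamma,X)}$, and running the half-plane calculus of Proposition \ref{prop:polynomialgrowthgeneral} with $y=(1+A)^{\sigma}x$, $x\in X_{\gamma+\sigma}$, $\sigma>1$, gives the integrability needed for $Y_0=X_{\gamma+\sigma}$. However, your two fallbacks for $p<q$ do not close that case: an $\Ma_{p,q}$ bound with $p<q$ controls no pointwise values of the symbol (testing on modulated dilates $e^{i\xi_0 t}\phi(t/n)\otimes y$ shows $\sup_\xi\|m(\xi)\|$ is not dominated by $\|m\|_{\Ma_{p,q}}$ when $p<q$), so the Cauchy-integral idea has no quantitative input, and ``apply Theorem \ref{thm:abstract polynomial growth} on $X_{\gamma+\sigma}$'' is circular, since hypothesis \eqref{it:condabstract polynomial growth2} is exactly what is missing for that space as well. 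The hypothesis enters only to justify the identity \eqref{eq:Fouriertne} for small $a$, so some substitute justification is needed in the regime $p<q$; note that the paper's one-line proof leaves this verification implicit rather than providing an argument you could adapt.
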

\begin{proof}
Theorem \ref{thm:abstract polynomial growth} contains \eqref{it:char2}$\Rightarrow$\eqref{it:char1}, and \eqref{it:char1}$\Rightarrow$\eqref{it:char2} follows from Theorem \ref{thm:abstractconverse} by letting $p=1$ and $q=\infty$.
\end{proof}

Note that Corollary \ref{cor:abstractchar} also characterizes semigroups which grow sublinearly, and in particular uniformly bounded semigroups. To characterize such semigroups it would not be possible to replace the multiplier norm in \eqref{eq:chareq} by a supremum norm, since $\|R(\lambda,A)\|_{\La(X)}\geq \text{dist}(\lambda,\sigma(A))^{-1}$ for all $\lambda\in\rho(A)$.

\subsection{Auxiliary results}

The theorems in this article also apply if $A$ is an $n\times n$ matrix acting on $X=\Rn$, $n\in\N$. For example, if
\[
\|(a+\ui\xi + A)^{-1}\|_{\La(X)} \leq g(1/a)\qquad (a>0,\xi\in \R)
\]
then one obtains $\|\ue^{-tA}\|_{\La(X)}\leq e M_{\omega}(1+2M\omega) (g(t)+1)$ for all $t>0$ if $\Rn$ is endowed with the standard norm, or if $(\ue^{-tA})_{t\geq0}$ is positive and $\Rn$ is endowed with the $\ell_{p}$-norm, $p\in[1,\infty]$. Here $\omega$, $M$ and $M_{\omega}$ are as in \eqref{eq:assTM}. Note that this estimate does not depend on $n$ but that it does require knowledge of $\omega$, $M$ and $M_{\omega}$. If these constants are unknown then the argument used to prove \cite[Theorem 4.8]{vDoKrSp93} (see also \cite{LeVeque-Trefethen84}) yields the following statement, which is presumably well known to experts. For the convenience of the reader we include the proof. Recall that it suffices to consider the case where $g$ grows at least linearly at infinity and $g(t)=O(t)$ as $t\to0$.

\begin{proposition}\label{prop:matrices}
Let $X$ be an $n$-dimensional normed vector space, $n\in\N$, and let $A\in\La(X)$ be such that $\C_-\subseteq\rho(A)$. Suppose that there exists a nondecreasing $g:(0,\infty)\to (0,\infty)$ such that
\[
\|(a+\ui\xi + A)^{-1}\|_{\La(X)} \leq g(1/a)\qquad (a\in(0,\infty),\xi\in \R).
\]
Then $\|e^{-tA}\|_{\La(X)}\leq en\frac{g(t)}{t}$ for all $t>0$.
\end{proposition}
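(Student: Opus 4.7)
The plan is to use a Bromwich contour integral, one integration by parts, and the finite-dimensional decay of the resolvent at infinity.

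Since $A\in\La(X)$ is bounded and $\sigma(-A)\subseteq\overline{\C_-}$, the Dunford--Riesz calculus yields, for any $a>0$,
\[
e^{-tA}=\frac{1}{2\pi\ui}\int_{a+\ui\R}e^{t\lambda}(\lambda+A)^{-1}\,d\lambda,
\]
the integral being conditionally convergent. In finite dimensions $\|(\lambda+A)^{-1}\|=O(|\lambda|^{-1})$ at infinity, so one integration by parts (with vanishing boundary terms) produces the absolutely convergent representation
\[
e^{-tA}=\frac{1}{2\pi\ui\,t}\int_{a+\ui\R}e^{t\lambda}(\lambda+A)^{-2}\,d\lambda.
\]
Specialising to $a:=1/t$, so that $|e^{t\lambda}|=e$ on the contour, and passing to norms yields
\[
\|e^{-tA}\|_{\La(X)}\leq\frac{e}{2\pi\,t}\int_{\R}\|(1/t+\ui\xi+A)^{-2}\|_{\La(X)}\,d\xi.
\]

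The decisive step is then to show that the integral on the right is bounded by $2\pi\,n\,g(t)$. For this I would split at a threshold $\xi_0>0$. On $|\xi|\leq\xi_0$ use the uniform hypothesis $\|(1/t+\ui\xi+A)^{-2}\|\leq g(t)^2$, contributing at most $2\xi_0\,g(t)^2$. On $|\xi|>\xi_0$ exploit $\dim X=n$: writing $\det(\lambda+A)=\prod_{j=1}^n(\lambda+\mu_j)$ with the $n$ eigenvalues $\mu_j$ of $A$ (which lie in $\overline{\C_+}$) and using the adjugate representation $(\lambda+A)^{-1}=\operatorname{adj}(\lambda+A)/\det(\lambda+A)$, one derives a tail bound of the form $\|(\lambda+A)^{-1}\|\leq cn/|\xi|$ for $|\xi|$ exceeding an absolute multiple of $n g(t)$; squaring and integrating yields a tail contribution of order $n^2/\xi_0$. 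Balancing the two regimes at $\xi_0\sim n/g(t)$ produces the required integral bound, and substituting back gives $\|e^{-tA}\|\leq en\,g(t)/t$ after tightening the numerical constants. Following the preliminary reduction in the proposition, it suffices to do this under $g(t)=O(t)$ as $t\to 0$ and $g(t)\gtrsim t$ as $t\to\infty$, which also keeps the bound finite at $t=0$.

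The main obstacle is making the dimensional tail bound $\|(\lambda+A)^{-1}\|\leq cn/|\xi|$ rigorous, with a constant independent of the location of $\sigma(A)$ and of $\|A\|_{\La(X)}$: naively, bounds coming from the Neumann series or a Jordan/Schur decomposition bring in $\|A\|$ or a similarity constant. The point is that the size of $\det(\lambda+A)$ alone, together with the resolvent hypothesis, suffices to control the tail uniformly in the spectral data of $A$. This is exactly the content of the contour-analysis argument in the proof of \cite[Theorem~4.8]{vDoKrSp93}, established there for the continuous Kreiss case $g(t)=Kt$; the present statement follows by carrying out that argument with a general nondecreasing $g$ replacing $Kt$.
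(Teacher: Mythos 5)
The first half of your plan (Bromwich representation on the line $\mathrm{Re}\,\lambda=a$, one integration by parts, choosing $a=1/t$) matches the setup of the paper's proof, but your decisive step contains a genuine gap. The claimed tail estimate $\|(\lambda+A)^{-1}\|_{\La(X)}\leq cn/|\xi|$ for $|\xi|$ larger than a multiple of $ng(t)$, with $c$ independent of $\|A\|_{\La(X)}$ and of the location of $\sigma(A)$, is false. The hypothesis only forbids spectrum in $\C_-$; eigenvalues may sit anywhere on the imaginary axis. Take $n=1$ and $A=-\ui\xi_1$ with $\xi_1$ huge: then $\|(a+\ui\xi+A)^{-1}\|=|a+\ui(\xi-\xi_1)|^{-1}\leq 1/a$, so the hypothesis holds with $g(s)=s$, yet at $\xi=\xi_1$ one has $\|(1/t+\ui\xi_1+A)^{-1}\|=t$, which is not $O(1/\xi_1)$. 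The $1/|\xi|$ decay of the resolvent along a vertical line only begins beyond the imaginary parts of the eigenvalues (roughly beyond $\|A\|$), a quantity the hypothesis does not control; in particular $|\det(\lambda+A)|=\prod_j|\lambda+\mu_j|$ can be as small as $a^n$ at arbitrarily large $|\xi|$. Consequently your split at a threshold $\xi_0\sim n/g(t)$ centered at $\xi=0$ cannot work, and even a repaired splitting (around each eigenvalue) would not recover the precise constant $2\pi n$ that you need in order to land on $en\,g(t)/t$ rather than $Cn\,g(t)/t$.

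Your appeal to the contour analysis of \cite[Theorem 4.8]{vDoKrSp93} also mischaracterizes that argument: it does not furnish a pointwise tail bound for the operator-valued resolvent, nor a bound on $\int_\R\|(a+\ui\xi+A)^{-2}\|\,d\xi$. What it (and the paper) actually does is scalarize first: choose, by Hahn--Banach, a norming functional $F\in\La(X)^*$ with $\|F\|\leq 1$ and $F(e^{-tA})=\|e^{-tA}\|_{\La(X)}$, observe that $z\mapsto F(R(z,A+a))$ is a scalar rational function of degree at most $n$, integrate by parts, and then apply Spijker's lemma (the arc-length of the image of a circle or line under a rational function of degree $n$ is at most $2\pi n$ times its sup norm, transported to the line by a M\"obius map). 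The scalarization is essential: Spijker's lemma is a statement about scalar rational functions, and the bound $\int_\R|F(R(z,A+a))'|\,|dz|\leq 2\pi n\sup_{z\in\ui\R}|F(R(z,A+a))|\leq 2\pi n\,g(1/a)$ holds for each fixed $F$, whereas your operator-norm integral would require choosing the norming functional depending on $\xi$, which this argument does not give. So the step you defer to the literature is precisely the step your proposal omits, and as formulated (a dimension-dependent pointwise decay of the resolvent) it is not salvageable.
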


\begin{proof}
Let $a,t>0$ and write, as in the proof of Proposition \ref{prop:polynomialgrowthgeneral},
\[
\ue^{-at}T(t)=\frac{1}{2\pi\ui}\int_{i\R}e^{-zt}R(z,A+a)\ud z.
\]
Let $F\in \La(X)^{*}$ be such that $\|F\|_{\La(X)^{*}}\leq 1$ and $F(T(t))=\|T(t)\|_{\La(X)}$. Integration by parts yields
\[
\ue^{-at}\|T(t)\|_{\La(X)}=\frac{1}{2\pi\ui}\int_{i\R}e^{-zt}F(R(z,A+a))\ud z=\frac{1}{2\pi\ui t}\int_{i\R}e^{-zt}F(R(z,A+a))'\ud z.
\]
One easily sees that $z\mapsto F(R(z,A+a))$ is a rational scalar-valued map with numerator and denominator of degree at most $n$. Now \cite[Lemma 2]{Spijker91} (after composing with a suitable M\"{o}bius transformation) shows that
\[
\ue^{-at}\|T(t)\|_{\La(X)}\leq \frac{n}{t}\sup_{z\in\ui\R}|F(R(z,A+a))|\leq \frac{ng(1/a)}{t}.
\]
Finally, set $a=1/t$ to conclude the proof.
\end{proof}

Proposition \ref{prop:matrices} is sharp in the case where $g(t)=K t$ for some $K\geq0$ and all $t>0$ (see \cite{vDoKrSp93,LeVeque-Trefethen84,Kraaijev}). For further discussion on this topic we refer the reader to \cite{Nikol}, where in particular improvements on the bounds have been obtained under additional geometric assumptions on the norm of $X$.

Finally, as a corollary of Theorem \ref{thm:analyticmain} we extend a theorem from \cite{Eisner-Zwart08} concerning the growth of the \emph{Cayley transform} $V(A):=(1-A)(1+A)^{-1}$ of a semigroup generator $-A$ on a Banach space $X$ with $-1\in\rho(A)$. Recall from Section \ref{subsec:analytic} that each eventually differentiable semigroup, and in particular each analytic semigroup, is asymptotically analytic. Also, if $-A$ generates a $C_{0}$-semigroup $(T(t))_{t\geq0}$ on a Hilbert space $X$ such that $s_{0}^{\infty}(-A)<0$, then $(T(t))_{t\geq0}$ is asymptotically analytic. Hence the following result both extends and improves \cite[Theorem 5.4]{Eisner-Zwart08}.

\begin{corollary}\label{cor:Cayley}
Let $(T(t))_{t\geq0}$ be an asymptotically analytic $C_{0}$-semigroup with generator $-A$ on a Banach space $X$ such that $-1\in\rho(A)$. Suppose that there exist $k\in \N_{0}$ and $C\geq0$ such that
\[
\|V(A)^{n}\|_{\La(X)}\leq Cn^{k}\qquad(n\in\N).
\]
Then there exists a $C'\geq0$ such that $\|T(t)\|_{\La(X)}\leq C'(1+t^{k+1})$ for all $t\geq0$.
\end{corollary}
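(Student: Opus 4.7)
My plan is to reduce the statement to Theorem \ref{thm:analyticmain} by converting the discrete polynomial bound on the Cayley transform into the resolvent bound $\|(\lambda+A)^{-1}\|_{\La(X)} \leq C(\Real(\lambda)^{-k-1}+1)$ for $\lambda\in\C_+$, with asymptotic analyticity providing the control needed for large $|\Imag(\lambda)|$.

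The first step is to establish the algebraic identity connecting the Cayley transform and the resolvent. Writing $\mu = \frac{\lambda+1}{\lambda-1}$ (so that $\lambda = \frac{\mu+1}{\mu-1}$) and expanding $\mu + V(A)$ directly gives
\[
(\lambda+A)^{-1} = \tfrac{\lambda-1}{2}(I+A)^{-1}(\mu+V(A))^{-1}.
\]
A short Möbius computation shows that $\lambda\in\C_+$ is equivalent to $|\mu|>1$; in particular the spectral radius bound $r(V(A))\leq 1$ (which follows from $\|V(A)^n\|\leq Cn^k$) combined with the spectral mapping for Möbius transforms yields $\sigma(A)\subseteq\overline{\C_+}$, so $\C_-\subseteq\rho(A)$.

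Next, since $|\mu|>1$ and $r(V(A))\leq 1$, the Neumann series gives $(\mu+V(A))^{-1} = \sum_{n\geq0}(-1)^n\mu^{-n-1}V(A)^n$, and the hypothesis on $\|V(A)^n\|$ leads to
\[
\|(\mu+V(A))^{-1}\|_{\La(X)} \leq C\sum_{n\geq0}|\mu|^{-n-1}(n^k+1) \leq C'(|\mu|-1)^{-k-1}
\]
valid whenever $|\mu|$ stays in a bounded range $(1,M]$. A direct computation of $|\mu|^2-1 = \frac{4\Real(\lambda)}{|\lambda-1|^2}$ shows that on any bounded subset of $\C_+$ staying away from $\lambda=1$, one has $|\mu|-1\asymp \Real(\lambda)$, so substituting back yields $\|(\lambda+A)^{-1}\|_{\La(X)}\leq C''\,\Real(\lambda)^{-k-1}$ on the strip $\{\lambda\in\C_+ : \Real(\lambda)\leq 1,\ |\Imag(\lambda)|\leq R\}$ for any fixed $R>0$.

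The main obstacle, and where the asymptotic analyticity hypothesis is used, is that the above Neumann-series estimate degenerates for $|\Imag(\lambda)|$ large: in that regime $|\mu|-1\sim 4\Real(\lambda)/|\Imag(\lambda)|^2$, giving a spurious polynomial blow-up in $\Imag(\lambda)$. However, asymptotic analyticity yields $s_0^\infty(-A)<0$, so there exist $R>0$ and $\eta_0<0$ with
\[
\sup\bigl\{\|(\eta+\ui\xi+A)^{-1}\|_{\La(X)} : \eta>\eta_0,\ |\xi|\geq R\bigr\}<\infty,
\]
which gives a uniform bound on $\|(\lambda+A)^{-1}\|_{\La(X)}$ on the complementary region $\{\lambda\in\C_+ : |\Imag(\lambda)|\geq R\}$. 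Patching this together with the bound above and the trivial Hille--Yosida estimate $\|(\lambda+A)^{-1}\|_{\La(X)} \lesssim 1$ for $\Real(\lambda)\geq 1$, one concludes
\[
\|(\lambda+A)^{-1}\|_{\La(X)}\leq C\bigl(\Real(\lambda)^{-k-1}+1\bigr) \qquad (\lambda\in\C_+).
\]
Applying Theorem \ref{thm:analyticmain} with the nondecreasing function $g(s):=C(s^{k+1}+1)$ then yields $\|T(t)\|_{\La(X)}\leq C'(t^{k+1}+1)$ for all $t>0$, and the bound at $t=0$ is trivial, completing the proof.
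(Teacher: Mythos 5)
Your argument is correct and follows essentially the same route as the paper: the paper likewise deduces $s_0^\infty(-A)<0$ from asymptotic analyticity, obtains the resolvent bound from the Cayley power estimate via exactly the kind of M\"obius/Neumann-series computation you wrote out (it cites the proof of Eisner--Zwart, Theorem 5.4, for this step), and then applies Theorem \ref{thm:analyticmain}. Two minor slips, neither a real gap: the identity should read $(\lambda+A)^{-1}=\tfrac{2}{\lambda-1}(I+A)^{-1}(\mu+V(A))^{-1}$ (your scalar prefactor is inverted, which is harmless on the bounded strip away from $\lambda=1$, and near $\lambda=1$ the resolvent is bounded anyway since $-1\in\rho(A)$), and the ``trivial Hille--Yosida'' bound is only available for $\Real(\lambda)>\omega_0(T)$, so on the compact leftover region $1\le\Real(\lambda)\le\omega_0(T)+1$, $|\Imag(\lambda)|\le R$ you should instead invoke continuity of the resolvent together with $\C_-\subseteq\rho(A)$ (or your strip estimate) to get the uniform bound.
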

\begin{proof}
First note that $s_{0}^{\infty}(-A)<0$, since $(T(t))_{t\geq0}$ is asymptotically analytic (see \cite[Proposition 2.4]{BaBlSr03}). Now proceed as in the proof of \cite[Theorem 5.4]{Eisner-Zwart08} to show that
\[
\|(a+i\xi+A)^{-1}\|_{\La(X)}\leq C_{1}a^{-k-1}\qquad (a>0,\xi\in\R)
\]
for some $C_{1}\geq0$. Theorem \ref{thm:analyticmain} then concludes the proof.
\end{proof}

\section{Application to a perturbed wave equation}\label{sec:exRen}

In \cite{Zabczyk75}, using a direct sum of Jordan blocks, Zabczyk constructed a $C_0$-semigroup $(T(t))_{t\geq0}$ with generator $-A$ on a Hilbert space such that $\omega_0(T)>s(-A)$. One might be tempted to think that this phenomenon only occurs in rather academic situations. However, in \cite[Theorem 1]{Renardy94} Renardy gave an example of a concrete perturbed wave equation with the same property. More precisely, the $C_{0}$-group $(T(t))_{t\in\R}$ with generator $-A$ which arises when formulating this wave equation as an abstract Cauchy problem has the property that $s(-A)=0=s(A)$ but $\w_{0}(T)\geq\frac{1}{2}$. In this section we prove that $\w_{0}(T)=\frac{1}{2}$, a matter which was left open in \cite{Renardy94}. In fact, Theorem \ref{thm:WaveRen} below yields a more precise growth bound for $(T(t))_{t\in\R}$.

On the two-dimensional torus $\T^2:=[0,2\pi]^2$, under the usual identification modulo $2\pi$, consider
\begin{equation}\label{eq:Waveeq}
\left\{
  \begin{array}{ll}
    u_{tt} = u_{xx} + u_{yy} + e^{iy} u_x,  &  t\in(0,\infty), x,y\in \T,  \\
    \\  u(0,x,y)  = f(x,y), \quad  u_t(0,x,y)=g(x,y), &  x,y\in \T,
  \end{array}
\right.
\end{equation}
for $f,g\in L^{2}(\T^{2})$. For $s\in \R$ let $H^{s}(\T^2)=W^{2,s}(\T^{2})$ be the second order Sobolev space equipped with the following convenient norm:
\[
\|f\|_{H^{s}(\T^2)} = \Big(|\wh{f}(0)|^2 + \!\sum_{k\in \Z^2\setminus\{0\}} |k|^{2s} |\wh{f}(k)|^2\Big)^{1/2}\qquad(f\in H^{s}(\T^{2})).
\]
Clearly, this norm is equivalent to the standard norm on $H^{s}(\T^{2})$:
\begin{equation}\label{eq:normequiH}
\|f\|_{H^{s}(\T^2)}\leq \Big(\sum_{k\in \Z^2} (1+|k|^{2})^{s} |\wh{f}(k)|^2\Big)^{1/2}\leq C_{s}\|f\|_{H^{s}(\T^{2})}
\end{equation}
for some $C_{s}\geq0$ and all $f\in H^{s}(\T^{2})$.
Then \eqref{eq:Waveeq} can be formulated as an abstract Cauchy problem on the Hilbert space $X := H^1(\T^2)\times L^2(\T^2)$:
\begin{equation}\label{eq:Aren}
\frac{d}{dt} \left(\begin{matrix}
               u \\
               v
             \end{matrix}\right) + A\left(\begin{matrix}
               u \\
               v
             \end{matrix}\right) = 0
\end{equation}
and $(u(0), v(0)) = (f, g)$, where $A = A_0 + B$ with $D(A) = H^2(\T^2)\times H^1(\T^2)$,
\[ A_0 = \left(
    \begin{array}{cc}
       0 & -1 \\
      -\Delta  & 0 \\
    \end{array}
  \right)\quad \text{and}\quad B = \left(\begin{array}{cc}
       0 & 0 \\
       -M \frac{\partial}{\partial x} & 0 \\
    \end{array}\right).
\]
Here $\Delta$ is the Laplacian with $D(\Delta) = H^2(\T^2)$, and $M:L^2(\T^2)\to L^2(\T^2)$ is given by $Mf(x,y)=e^{iy}f(x,y)$ for $f\in L^{2}(\T^{2})$ and $x,y\in\T$. Using Fourier series one easily checks that $-A_0$ generates a $C_0$-group. More precisely, let $e_k(x,y) := (2\pi)^{-1}e^{i k\cdot (x,y)}$ for $k\in \Z^2$. Taking the discrete Fourier tranform, the system
\[
\frac{d}{dt} \left(\begin{matrix}
               \ph \\
               \psi
             \end{matrix}\right) + A_{0}\!\left(\begin{matrix}
               \ph \\
               \psi
             \end{matrix}\right) = 0
\]
can be solved explicitly. Let $h_{k}:=\frac{1}{2\pi}\int_{\T^{2}}\ue^{-ik\cdot(x,y)}h(x,y)\ud x\ud y$, $k\in\Z^{2}$, be the Fourier coefficients of $h\in L^{2}(\T^{2})$. Then
\begin{align*}
\varphi(t) & = (f_0 + t g_0) e_0 + \sum_{k\in \Z^2\setminus\{0\}} \Big(\cos(|k|t) f_k + \frac{\sin(|k|t)}{|k|} g_k\Big) e_k, \\
\psi(t)& = g_0 e_0 + \sum_{k\in \Z^2\setminus\{0\}} (-|k|\sin(|k|t) f_k + \cos(|k|t) g_k) e_k
\end{align*}
for $t\in\R$. Set $\ue^{-tA_{0}}\!\left(\begin{matrix}
               f \\
               g
             \end{matrix}\right) :=\left(\begin{matrix}
               \ph(t) \\
               \psi(t)
             \end{matrix}\right)$. One has
\begin{align*}
\|(\varphi(t), \psi(t))\|_{X}^2 & = |f_0 + t g_0|^2 + |g_0|^2 +\!\sum_{k\in \Z^2\setminus \{0\}}\!\big(|k|^2 |f_k|^2 + |g_k|^2\big)
\\ & \leq   2|f_0|^2 +\!\sum_{k\in \Z^2\setminus \{0\}}\!|k|^2 |f_k|^2 + 2|t g_0|^2 + |g_0|^2 +\!\sum_{k\in \Z^2\setminus \{0\}} |g_k|^2
\\ & \leq   2\|f\|_{H_1(\T^2)}^2 +  (1+2t^2)\|g\|_{L^2(\T^2)}^2\leq 2(1+|t|)^2\|(f, g)\|_{X}^2,
\end{align*}
so that $\|\ue^{-tA_{0}}\|_{\La(X)}\leq \sqrt{2}(1+|t|)$ for all $t\in\R$. One could alternatively get a norm estimate using Theorem \ref{thm:polynomialgrowthHS}, but in this case one obtains only a quadratic bound.

Since $\|B\|_{\La(X)}\leq 1$, standard perturbation theory (see \cite[Theorem III.1.3]{Engel-Nagel00}) shows that $-A = -A_0 -B$ generates a $C_{0}$-group $(T(t))_{t\in\R}$ with
\begin{equation}\label{eq:semigrouppert}
\|T(t)\|_{\La(X)}\leq \sqrt{2} e^{(1+\sqrt{2})|t|}\qquad(t\in\R).
\end{equation}
It was shown in \cite[Theorem 1]{Renardy94} that $\sigma(A)\subseteq i\R$ and $\omega_0(T)\geq \frac12$, and by the same method one sees that $\omega_0(S)\geq \frac12$ for $(S(t))_{t\geq0}:=(T(t)^{-1})_{t\geq0}$, the semigroup generated by $A$. The next theorem is the main result of this section. It shows that these lower bounds are optimal and in doing so significantly improves \eqref{eq:semigrouppert}.

\begin{theorem}\label{thm:WaveRen}
Let $X$ and $A$ be as before, and let $(T(t))_{t\in\R}$ and $(S(t))_{t\in\R}$ be the $C_{0}$-semigroups generated by $-A$ and $A$, respectively. Then $\omega_0(T) = \omega_0(S)=\frac12$. Moreover, there exists a $C\geq0$ such that
\[
\|T(t)\|_{\La(X)}\leq C(1+|t|)e^{|t|/2}\qquad(t\in \R).
\]
\end{theorem}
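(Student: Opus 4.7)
The plan is to apply Theorem~\ref{thm:polynomialgrowthHS} to suitably rescaled semigroups, which reduces the claim to a sharp Kreiss-type resolvent estimate, and then to verify that estimate by an explicit Fourier series calculation. Since $T(-t)=S(t)$, the stated bound is equivalent to $\|T(t)\|_{\La(X)}\leq C(1+t)e^{t/2}$ and $\|S(t)\|_{\La(X)}\leq C(1+t)e^{t/2}$ for $t\geq 0$. For the first, consider $\tilde T(t):=e^{-t/2}T(t)$, whose generator is $-(A+\tfrac12)$; by Renardy's result $\sigma(A)\subseteq i\R$, we have $\C_-\subseteq\rho(A+\tfrac12)$. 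Theorem~\ref{thm:polynomialgrowthHS} applied with $g(s):=K(s+1)$ then reduces matters to
\begin{equation}\label{eq:mainresest}
\|(z+A)^{-1}\|_{\La(X)}\leq K\bigl((\Real z-\tfrac12)^{-1}+1\bigr)\qquad(\Real z>\tfrac12),
\end{equation}
and the analogous argument for $\tilde S(t):=e^{-t/2}S(t)$ reduces the bound on $S$ to the counterpart of \eqref{eq:mainresest} on $\{\Real z<-\tfrac12\}$, obtained by the substitution $z\mapsto -z$.

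Since $B$ shifts the $y$-Fourier frequency by $+1$ and multiplies by $-ik_1$, the closed subspaces $X_m:=\{(\ph,\psi)\in X\mid \wh{\ph}(k)=\wh{\psi}(k)=0\text{ if }k_1\neq m\}$ for $m\in\Z$ are $A$-invariant, and $X$ is the orthogonal Hilbert sum of the $X_m$. Hence \eqref{eq:mainresest} reduces to a uniform-in-$m$ bound on $\|(z+A|_{X_m})^{-1}\|_{\La(X_m)}$. For $(f,g)\in X_m$ with $y$-Fourier coefficients $(f_j),(g_j)_{j\in\Z}$, the equation $(z+A|_{X_m})(\ph,\psi)=(f,g)$ reads $z\ph_j-\psi_j=f_j$ and $(m^2+j^2)\ph_j+z\psi_j-im\,\ph_{j-1}=g_j$. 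Eliminating $\psi_j=z\ph_j-f_j$ yields the first-order recursion $(z^2+m^2+j^2)\ph_j-im\,\ph_{j-1}=g_j+zf_j$, whose unique $\ell^2$-solution is
\[
\ph_j=\sum_{l\leq j}\frac{(im)^{j-l}}{\prod_{s=l}^{j}(z^2+m^2+s^2)}\,(g_l+zf_l).
\]

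Writing $\sigma=\Real z$ and $\tau=\Imag z$, each denominator factor satisfies $|z^2+m^2+s^2|\geq\max\bigl(2\sigma|\tau|,\,|m^2+s^2+\sigma^2-\tau^2|\bigr)$, coming from the imaginary and real parts respectively. The first bound controls the near-resonant indices $s^2\approx\tau^2-m^2-\sigma^2$, while the second provides off-resonant decay summable via $\sum_s(m^2+s^2)^{-1}<\infty$. A Schur-test argument applied to the kernel $K_{j,l}(z):=(im)^{j-l}/\prod_{s=l}^{j}(z^2+m^2+s^2)$, and separately to $(m^2+j^2)^{1/2}K_{j,l}(z)$ for the $H^1$-component, then produces uniformly in $m\in\Z$
\[
\|(\ph,\psi)\|_{X_m}\leq C\bigl((\sigma-\tfrac12)^{-1}+1\bigr)\|(f,g)\|_{X_m}\qquad(\sigma>\tfrac12),
\]
which is \eqref{eq:mainresest}.

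The decisive step is this Schur estimate. A Neumann series based on $(z+A)^{-1}=(z+A_0)^{-1}\sum_{n\geq 0}(-B(z+A_0)^{-1})^n$ fails because $\|B(z+A_0)^{-1}\|_{\La(X)}$ is unbounded as $|\Imag z|\to\infty$ even for $\Real z$ bounded below, so the exact shift-like action of $B$ on Fourier modes must be exploited. The emergence of the critical constant $\tfrac12$ reflects the precise balance between the single-shift numerator $|im|$ and the minimal denominator $2\sigma|\tau|$ attained at the imaginary-axis resonances, which forces resolvent blow-up precisely as $\sigma\to\tfrac12^+$ and matches Renardy's lower bound $\omega_0\geq\tfrac12$.
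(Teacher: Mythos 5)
Your reduction is exactly the paper's: use $\sigma(A)\subseteq i\R$, rescale by $e^{\mp t/2}$, and invoke Theorem \ref{thm:polynomialgrowthHS} with $g(s)\sim s$, so that everything hinges on the sharp resolvent bound $\|(z+A)^{-1}\|_{\La(X)}\leq K\bigl((\Real z-\tfrac12)^{-1}+1\bigr)$ on $\Real z>\tfrac12$ and its mirror image. The Fourier-mode reduction to the recursion $(z^2+m^2+j^2)\ph_j-im\,\ph_{j-1}=g_j+zf_j$ is also the same starting point as the paper's Lemma \ref{lem:WaveRen}. The difference is in how that recursion is estimated, and this is where your proposal has a genuine gap: the decisive quantitative step is asserted, not proved. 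You solve the recursion explicitly and announce that ``a Schur-test argument'' on $K_{j,l}(z)$ and on the $H^1$-weighted kernel yields the bound $C\bigl((\sigma-\tfrac12)^{-1}+1\bigr)$ uniformly in $m$, but the two pointwise bounds you state, $|z^2+m^2+s^2|\geq\max\bigl(2\sigma|\tau|,\,|m^2+s^2+\sigma^2-\tau^2|\bigr)$, do not by themselves give this. The per-factor ratio $|m|/(2\sigma|\tau|)$ is only $\leq 1/(2\sigma)$ when $|\tau|\gtrsim|m|$, so one must split into resonant and non-resonant regimes, count the near-resonant indices, and check that the products over ranges crossing the resonance still sum to $O\bigl((2\sigma-1)^{-1}\bigr)$; moreover the $H^1/L^2$ bookkeeping requires controlling kernels with the weight ratio $(m^2+j^2+1)^{1/2}(m^2+l^2+1)^{-1/2}$ and with extra factors $|z|$, $|z|^2$ (this is precisely the role of Lemma \ref{lem:elementarycomplex} in the paper), none of which appears in your sketch. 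Since this estimate is the entire content of the theorem -- it is where the critical value $\tfrac12$ must emerge uniformly in $m$ -- the proof is incomplete as written. Your closing remark is also factually wrong: $\|B(z+A_0)^{-1}\|_{\La(X)}$ is \emph{not} unbounded as $|\Imag z|\to\infty$; a direct computation with $u_k=(g_k+zf_k)/(z^2+|k|^2)$ shows it is uniformly bounded for fixed $\Real z>0$ (of order $\sigma^{-1}$). The correct obstruction to a Neumann series is that this norm does not drop below $1$ for $\sigma$ just above $\tfrac12$, so the series cannot produce the sharp threshold.

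For comparison, the paper does not solve the recursion at all: it applies $|r+s|^2\leq(1+\delta)|r|^2+(1+\delta^{-1})|s|^2$ to each coefficient, sums over $(m,n)$, and \emph{absorbs} the shifted term into the left-hand side using the single inequality $|\lambda^2+m^2+(n+1)^2|^2\geq 4a^2(m^2+(n+1)^2)$, which gives $y_{m,n}\leq 1/(4a^2)<1$ exactly when $|a|>\tfrac12$. This absorption argument delivers the sharp constant and the $\varepsilon^{-1}$ blow-up in a few lines and with the weight bookkeeping handled by Lemma \ref{lem:elementarycomplex}; if you want to keep your explicit-kernel route, you would essentially have to reprove that inequality inside a carefully weighted Schur test, which is doable but substantially more work than the estimate you have written down.
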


\begin{remark}
For each $R\geq0$ there exists a $C_{R}\geq0$ such that $\|(\frac12 + i\xi\pm A)^{-1}\|_{\La(X)}\leq C_R$ for $|\xi|\leq R$, since $\sigma(A) \subseteq i\R$, and it follows from Theorem \ref{thm:WaveRen} that $C_{R}\to\infty$ as $R\to\infty$. It would be interesting to study the asymptotic behavior of $\|(\frac12 + i\xi\pm A)^{-1}\|_{\La(X)}$ as $|\xi|\to \infty$. Moreover, if
$\|e^{-|t|/2} T(t)\|_{\La(X)}$ were to grow asymptotically linearly as $t\to \infty$ then this would solve the optimality issue left open after Theorem \ref{thm:polynomialgrowthHS} and in \cite{Eisner-Zwart06}.
\end{remark}

The proof of Theorem \ref{thm:WaveRen} relies on two lemmas. The first collects some basic estimates.

\begin{lemma}\label{lem:elementarycomplex}
Let $z\in \C$ be such that $|\Real(z)|\geq \frac12$, and let $y\in \R$. Then
\[
(i)\ \frac{|z|^2}{|z^2+y^2|^2}\leq 4, \quad (ii)  \  \frac{y^2+1}{|z^2+y^2|^2}\leq 16,
\quad (iii)
\ \frac{|z|^4}{|z^2+y^2|^2} \leq 32 (y^2+1).
\]
\end{lemma}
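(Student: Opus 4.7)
The plan is to reduce all three inequalities to two elementary lower bounds for $|z^2 + y^2|^2$. Writing $z = a + ib$ with $|a| \geq 1/2$ and setting $\alpha := |z|^2 = a^2 + b^2$ and $\beta := y^2 \geq 0$, a direct expansion yields the identity
\[
|z^2 + y^2|^2 = (\alpha - \beta)^2 + 4 a^2 \beta,
\]
which can be cross-checked against the factorization $|z^2+y^2|^2 = (a^2 + (b-y)^2)(a^2 + (b+y)^2)$. Together these give the two bounds
\[
|z^2+y^2|^2 \geq 4 a^2 \beta \geq \beta \qquad \text{and} \qquad |z^2+y^2|^2 \geq a^2 (a^2 + y^2),
\]
where the second uses that $(b-y)^2 + (b+y)^2 = 2(b^2+y^2)$ forces $\max((b-y)^2,(b+y)^2) \geq y^2$.

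For (i) I would treat the right-hand side $(\alpha-\beta)^2 + 4a^2\beta$ as a quadratic in $\beta$ with $\alpha, a$ fixed and find its minimum on $[0,\infty)$. The critical point $\beta = \alpha - 2a^2$ is feasible exactly when $b^2 \geq a^2$, in which case the minimum value is $4a^2 b^2$; otherwise the minimum on $[0,\infty)$ is at $\beta=0$ with value $\alpha^2$. Using $a^2 \geq 1/4$ one then verifies $16a^2 b^2 \geq a^2 + b^2 = \alpha$ in the first regime (since $b^2 \geq a^2 \geq 1/4$ gives $16 a^2 b^2 \geq 4b^2 \geq \alpha$) and $4\alpha^2 \geq \alpha$ in the second (since $\alpha \geq a^2 \geq 1/4$). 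Either way $|z|^2 \leq 4 |z^2+y^2|^2$.

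For (ii) I would use $|z^2+y^2|^2 \geq a^2(a^2+y^2) \geq \tfrac{1}{4}(\tfrac{1}{4}+y^2)$ and then observe
\[
\frac{y^2+1}{|z^2+y^2|^2} \leq \frac{4(y^2+1)}{y^2+1/4} = 4 + \frac{3}{y^2 + 1/4} \leq 4 + 12 = 16.
\]
For (iii) I would split into two cases. If $\alpha \geq 2\beta$ then $(\alpha-\beta)^2 \geq \alpha^2/4$, so the identity gives $|z|^4/|z^2+y^2|^2 \leq 4 \leq 32(y^2+1)$. If $\alpha < 2\beta$ then $\alpha^2 < 4\beta^2$ and $|z^2+y^2|^2 \geq 4a^2\beta \geq \beta$, yielding $|z|^4/|z^2+y^2|^2 \leq 4\beta \leq 32(\beta+1)$.

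There is no real obstacle; the only conceptual point is choosing the right lower bound for $|z^2+y^2|^2$ in each case (the $4a^2\beta$ bound handles the regime where $\beta$ dominates $\alpha$, while the $a^2(a^2+y^2)$ bound is tight when both $\alpha$ and $\beta$ are small). The algebraic identity above makes it transparent which bound to use and where the constants $4$, $16$, $32$ come from; the extremal case is $z = 1/2$, $y = 0$.
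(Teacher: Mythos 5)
Your proof is correct: the identity $|z^2+y^2|^2=(\alpha-\beta)^2+4a^2\beta$, the factorization $(a^2+(b-y)^2)(a^2+(b+y)^2)$, the minimization in $\beta$ for (i), and the two case splits for (iii) all check out, and the constants $4$, $16$, $32$ come through. The paper's argument is in the same elementary spirit but is organized more economically: it expands $|z^2+y^2|^2=(y^2-s^2)^2+a^4+2a^2y^2+2a^2s^2$ and reads off, using only $a^2\geq\tfrac14$, the single two-sided lower bound $|z^2+y^2|^2\geq\max\bigl(\tfrac{1}{16}(1+y^2),\tfrac14|z|^2\bigr)$, which gives (i) and (ii) at once with no case analysis or optimization; for (iii) it avoids your case split by the triangle-type estimate $|z|^4\leq(|z^2+y^2|+y^2)^2\leq 2|z^2+y^2|^2+2y^4$, dividing by $|z^2+y^2|^2$ and invoking (ii). Your route buys a bit more structural insight — the quadratic-in-$\beta$ viewpoint shows exactly where each lower bound is tight (and identifies $z=\tfrac12$, $y=0$ as extremal for (i) and (ii)) — at the cost of more bookkeeping, whereas the paper's deduction of (iii) from (ii) is the slicker step you did not use.
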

\begin{proof}
Write $z = a+is$ for $a, s\in \R$ with $|a|\geq 1/2$. Then (i) and (ii) follow from
\[
|z^2+y^2|^2 = (y^2-s^2)^2 + a^4 + 2y^2 a^2+2a^2 s^2\geq \max(\tfrac{1}{16}(1+y^2), \tfrac14|z|^2).
\]
For (iii) note that
\[
|z|^4\leq (|z^2+y^2| + y^2)^2 \leq 2|z^2+y^2|^2 + 2y^4,
\]
divide by $|z^2+y^2|^2$, and use (ii).
\end{proof}

The following lemma contains the required resolvent estimates for $A$.

\begin{lemma}\label{lem:WaveRen}
Let $X$ and $A$ be as before. Then there exists a $C\geq0$ such that for all $\varepsilon>0$, $\xi\in \R$ and $\lambda = \pm (\frac12 +\varepsilon) + i\xi$ one has
\[
\|(\lambda + A)^{-1}\|_{\La(X)}\leq C \max(\varepsilon^{-1}, 1).
\]
\end{lemma}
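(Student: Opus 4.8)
\textbf{Proof plan for Lemma \ref{lem:WaveRen}.}

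The plan is to compute the resolvent $(\lambda+A)^{-1}$ fairly explicitly by exploiting the block structure of $A=A_0+B$ and passing to Fourier coefficients. First I would reduce to estimating $(\lambda+A)^{-1}$ on the Fourier modes: for $h=(f,g)\in X$ with Fourier coefficients $(f_k,g_k)$, solving $(\lambda+A)(u,v)=(f,g)$ amounts to solving, mode by mode, a small linear system coupling the coefficient $u_k$ with $u_{k'}$ for the shifted index $k'=k-(0,1)$ (since $M$ multiplies by $e^{iy}$, which shifts the $y$-frequency by one) and $v_k$. Concretely, from the second row one gets $v = -\lambda u + g$ after eliminating, and from the first row $(\lambda^2 + |k|^2) u_k + (\text{term from } B) = \lambda f_k + g_k + \lambda^2(\dots)$; the operator $B$ contributes a term $\lambda \cdot (i k'_1) u_{k'}$ coming from $-M\partial_x u$. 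So the system is \emph{triangular} up to the shift in the $y$-direction, and I would set $\mu := \frac12+\eps$, $\lambda = \pm\mu + i\xi$, so that $\Real(\lambda^2) $ and the quantities $|\lambda|$, $|\lambda^2+|k|^2|$ are exactly what Lemma \ref{lem:elementarycomplex} is designed to control (note $|\Real(\lambda)| = \mu \geq \frac12$, so with $z = \lambda$, $y = |k|$ the three bounds (i)--(iii) apply, and the extra $\eps$ only helps).

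Next I would carry out the bookkeeping for the norm. The $X$-norm weights the $u$-component by $|k|^2$ (the $H^1$ norm) and the $v$-component by $1$ (the $L^2$ norm). Writing $u_k$ in terms of $f_{k}, g_{k}$ and the ``lower'' coefficient $u_{k'}$, I would get $u_k = \frac{1}{\lambda^2+|k|^2}\big(\lambda f_k + g_k - \lambda (ik'_1) u_{k'} + (\text{bounded correction})\big)$ or something structurally similar; then $v_k = -\lambda u_k + g_k$. Multiplying by the correct weights and using Lemma \ref{lem:elementarycomplex}: the factor $\frac{|k|^2}{|\lambda^2+|k|^2|^2}$ hitting $|\lambda f_k|^2$ is controlled by (i); the factor hitting $|g_k|^2$ by (ii); and the dangerous term $\frac{|k|^2 |\lambda|^2 |k'_1|^2}{|\lambda^2+|k|^2|^2}|u_{k'}|^2$ — note $|k'_1|\leq |k'|$ and $|k|^2 \leq 2|k'|^2 + 2$ — is handled by (iii), which gives a bound of the form $C(|k'|^2+1)|u_{k'}|^2$ up to factors of $|\lambda|$; the $\eps^{-1}$ (equivalently $\mu^{-1}$) only enters from the single mode(s) where $|k|$ is small, giving the $\max(\eps^{-1},1)$ in the statement. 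Because the coupling is a one-directional shift in a single coordinate, the ``chain'' $u_k \leftarrow u_{k'} \leftarrow u_{k''}\leftarrow\cdots$ terminates after finitely many steps once the $y$-frequency leaves the spectrum, or more precisely one can organize the modes along lines $\{(k_1, k_2 + j) : j\in\Z\}$ and sum a geometric-type series in $j$; I would make this precise by showing each step contributes a factor strictly less than $1$ (uniformly in $\eps$) for $|k|$ large, absorbing the finitely many bad modes into the constant $C$.

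The main obstacle I expect is exactly this last point: controlling the \emph{propagation of the error along the shift chain} and showing it does not blow up. Naively each application of the recursion multiplies by roughly $\frac{|\lambda||k'_1|}{|\lambda^2+|k|^2|^{1/2}}\cdot\frac{|k|}{|\lambda^2+|k|^2|^{1/2}}$, and one must verify this product is bounded by a constant $<1$ uniformly once $|k|$ exceeds some threshold (depending only on $C$, not on $\eps$), with the finitely many remaining modes near $k=0$ contributing the $\max(\eps^{-1},1)$ and a constant to the total. A clean way to do this is to work one $k_1$-slice at a time, treat $A$ restricted to that slice as $A_0$ plus a bounded, in fact Hilbert--Schmidt-type, perturbation $B$, and either (a) sum the Neumann series for $(\lambda+A_0+B)^{-1} = (\lambda+A_0)^{-1}\sum_{j\geq0}(-B(\lambda+A_0)^{-1})^j$ after checking $\|B(\lambda+A_0)^{-1}\|<1$ for $|\xi|$ large using Lemma \ref{lem:elementarycomplex}, handling small $|\xi|$ separately via $\sigma(A)\subseteq i\R$; or (b) diagonalize $A_0$ and invert the resulting bidiagonal operator directly. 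Option (a) is cleanest: the only real computation is the estimate $\|B(\lambda+A_0)^{-1}\|_{\La(X)}\leq \tfrac12$ (say) for $|\xi|$ beyond a fixed radius and all $\eps>0$, which again reduces to the three inequalities of Lemma \ref{lem:elementarycomplex}, together with a routine compactness argument on the complementary bounded range of $\xi$.
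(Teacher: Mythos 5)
Your setup is right --- passing to Fourier coefficients, observing that $e^{iy}$ shifts the $y$-frequency by one so that the system couples $u_{m,n}$ only to $u_{m,n-1}$, reducing to $u$ via $v=\lambda u-f$, and using Lemma \ref{lem:elementarycomplex} to control the weights --- and this is exactly how the paper proceeds. But the two mechanisms you propose for closing the estimate both rest on quantitative claims that are false, and the way you locate the source of the $\varepsilon^{-1}$ is wrong. Your option (a) claims $\|B(\lambda+A_0)^{-1}\|_{\La(X)}\leq\tfrac12$ for $|\xi|$ large, uniformly in $\varepsilon$. Testing on a single resonant mode $k=(m,0)$ with $m^2\approx\xi^2-a^2$ (where $a=\tfrac12+\varepsilon$), one has $|\lambda^2+m^2|=2a|\xi|$, and optimizing over $(f,g)=(c\,e_k,d\,e_k)$ with $d$ and $\lambda c$ aligned gives $\|B(\lambda+A_0)^{-1}\|\geq\frac{1}{\sqrt2\,a}(1+o(1))=\frac{\sqrt2}{1+2\varepsilon}(1+o(1))$, which exceeds $1$ for all small $\varepsilon$ and large $|\xi|$. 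So the Neumann series for $(I+B(\lambda+A_0)^{-1})^{-1}$ simply does not converge; invertibility comes from the one-directional shift structure, not from smallness of $B(\lambda+A_0)^{-1}$. (One can also see abstractly that your claim cannot hold: since $\|(\lambda+A_0)^{-1}\|$ is uniformly bounded on $\Real\lambda\geq\tfrac12$, your bound would force $\sup_{\Real\lambda\geq 1/2}\|(\lambda+A)^{-1}\|<\infty$ and hence $\omega_0(T)<\tfrac12$ by a Neumann perturbation of the half-plane, contradicting Renardy's lower bound $\omega_0(T)\geq\tfrac12$.) The same objection applies to your chain-iteration variant: the per-step transfer factor is not ``strictly less than $1$ uniformly in $\varepsilon$ for $|k|$ large''; near resonance $m^2+(n+1)^2\approx\xi^2$ it is only bounded by $\frac{1}{2a}=\frac{1}{1+2\varepsilon}$, which tends to $1$ as $\varepsilon\to0$, and the resonant modes have $|k|\approx|\xi|$ arbitrarily large. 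The $\varepsilon^{-1}$ does \emph{not} come from finitely many small modes near $k=0$ (those are harmless, since $|\lambda^2+|k|^2|\approx\xi^2$ there for large $|\xi|$); it comes from summing the nearly critical geometric series $\sum_j(1+2\varepsilon)^{-j}\sim\varepsilon^{-1}$ along the whole chain.

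The fix, which is what the paper does, avoids iteration entirely: square the one-step relation using $|r+s|^2\leq(1+\delta)|r|^2+(1+\delta^{-1})|s|^2$, sum the weighted squares over all $(m,n)$, reindex the shifted term (which leaves the sum invariant), and \emph{absorb} it into the left-hand side. The absorption is legitimate because the coupling coefficients satisfy
\begin{equation*}
y_{m,n},\,z_{m,n}\leq\frac{1}{4a^2}=\frac{1}{(1+2\varepsilon)^2}<1,
\end{equation*}
which follows from the elementary minimization $|\lambda^2+m^2+(n+1)^2|^2\geq 4a^2\bigl(m^2+(n+1)^2\bigr)$; choosing $\delta\in(0,4a^2-1)$ then produces the factor $\bigl(1-\tfrac{1+\delta}{4a^2}\bigr)^{-1/2}\sim\varepsilon^{-1}$, which is precisely the constant in the lemma. (One also needs the sums being absorbed to be finite a priori, which holds since $(u,v)\in D(A)$.) So your first route is salvageable once you replace ``uniform contraction away from finitely many modes'' by this sharp, $\varepsilon$-dependent absorption; your option (a) is not.
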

\begin{proof}
Let $\lambda\in\C\setminus \ui\R$, $(u,v)\in D(A)$ and $(f,g)\in X$ be such that $(\lambda+ A)(u,v) = (f,g)$. Then
\begin{equation}\label{eq:elliptic}
 \lambda^2 u -\Delta u - e^{iy} u_x = g + \lambda f
\end{equation}
in $L^{2}(\T^{2})$. Since $v =  \lambda u - f$, it suffices to prove
\begin{equation}\label{eq:toproveespilonest}
\|u\|_{H^1(\T^{2})} + \|\lambda u\|_{L^2(\T^{2})} \leq C \max(1,\varepsilon^{-1}) (\|f\|_{H^1(\T^{2})} + \|g\|_{L^2(\T^{2})})
\end{equation}
if $\lambda = \pm(\frac12 +\varepsilon) + i\xi$ for $\varepsilon>0$ and $\xi\in \R$.
Write $u = \sum_{(m,n)\in \Z^2} u_{m,n} e_{m,n}$ with $(u_{m,n})_{m,n\in\Z}$ the Fourier coefficients of $u$ and $(e_{m,n})_{m,n\in\Z}$ the normalized trigonometric basis of $L^{2}(\T^{2})$. Then \eqref{eq:elliptic} yields
\[
(\lambda^2+ m^2+n^2)u_{m,n} =  i m u_{m, n-1} + g_{m,n} + \lambda f_{m,n}\qquad(m,n\in \Z).
\]
Now, using that $|r+s|^2\leq (1+\delta) |r|^2 + (1+\delta^{-1})|s|^2$ for any fixed $\delta>0$ and all $r,s\in \C$, one has
\begin{equation}\label{eq:Fourierest}
|u_{m,n}|^2 \leq  \frac{(1+\delta) |m u_{m, n-1}|^2}{|\lambda^2+ m^2+n^2|^2} + \big(1+\frac1\delta\big) \Big(\frac{|g_{m,n}|}{|\lambda^2+ m^2+n^2|} + \frac{|\lambda f_{m,n}|}{|\lambda^2+ m^2+n^2|}\Big)^2.
\end{equation}

We first bound $\|u\|_{H^1(\T^{2})}$ in \eqref{eq:toproveespilonest}. From \eqref{eq:Fourierest} we obtain
\[
\sum_{m,n\in\Z} (m^2+n^2+1)|u_{m,n}|^2 \leq (1+\delta) \sum_{m,n\in\Z} \frac{m^2 (m^2+(n+1)^2+1)|u_{m, n}|^2}{|\lambda^2+ m^2+(n+1)^2 |^2} + C_{f,g}^2
\]
for
\[
C_{f,g}^2 = \big(1+\frac{1}{\delta}\big) \!\sum_{k\in \Z^2} \Big(\frac{(|k|^2+1)^{1/2} |g_{k}|}{|\lambda^2+ |k|^2|} + \frac{(|k|^2+1)^{1/2}|\lambda f_{k}|}{|\lambda^2+ |k|^2|}\Big)^2.
\]
Lemma \ref{lem:elementarycomplex} (i) and (ii) yield a $C_{1}\geq0$ such that $C_{f,g}\leq C_{1} (1+\delta^{-1})^{1/2} (\|f\|_{H^1}+\|g\|_{L^2})$, so that
\begin{equation}\label{eq:aprioiralmost}
\sum_{m,n\in\Z} (m^2+n^2+1)|u_{m,n}|^2 \big(1-(1+\delta) y_{m,n}\big)
\leq C_{1}^2 (1+\delta^{-1}) (\|f\|_{H^1}+ \|g\|_{L^2})^2
\end{equation}
for
\[
y_{m,n} := \frac{m^2(m^2+(n+1)^2+1)}{(m^2+n^2+1)|\lambda^2+ m^2+(n+1)^2 |^2}\qquad (m,n\in\Z).
\]
Now suppose that $\lambda = a+i\xi$ for $\xi\in \R$ and $|a|> \frac12$. Then a simple minimization argument yields
\begin{equation}\label{eqlambdamin}
|\lambda^2+ m^2+(n+1)^2|^2 = (a^2-\xi^2+m^2+(n+1)^2)^2 + 4 a^2\xi^2 \geq 4a^2 (m^2+(n+1)^2),
\end{equation}
from which it follows that $y_{m,n} \leq \frac{1}{4a^2}$ for all $m,n\in\Z$. Combining this with \eqref{eq:normequiH} and \eqref{eq:aprioiralmost}, we obtain that for $\delta\in(0,4a^{2}-1)$ one has
\[
\|u\|_{H^1(\T^{2})}  \leq   C_{1} \frac{ 2|a|(1+\delta^{-1})^{1/2}}{(4a^2-(1+\delta))^{1/2}} (\|f\|_{H^1(\T^{2})}+ \|g\|_{L^2(\T^{2})}).
\]
For $\varepsilon>0$ such that $|a|=\frac{1}{2}+\varepsilon$ one now easily obtains a $C_{2}\geq0$ independent of $\varepsilon$ such that
\[
\|u\|_{H^1(\T^{2})} \leq  C_{2}\max(1,\varepsilon^{-1}) (\|f\|_{H^1(\T^{2}}+ \|g\|_{L^2(\T^{2})}).
\]

We now bound $\|\lambda u\|_{L^2(\T^{2})}$ in \eqref{eq:toproveespilonest}. From \eqref{eq:Fourierest} one obtains
\begin{equation}\label{eq:aprioriestuRen2}
\sum_{m,n\in\Z} |\lambda|^2|u_{m,n}|^2 \leq (1+\delta) \sum_{m,n} \frac{|\lambda|^{2}m^2 |u_{m, n}|^2}{|\lambda^2+ m^2+(n+1)^2 |^2} + K_{f,g}^2,
\end{equation}
where
\[
K_{f,g}^2 = \big(1+\frac{1}{\delta}\big) \sum_{k\in \Z^2} \Big(\frac{|\lambda| |g_{k}|}{|\lambda^2+ |k|^2|} + \frac{|\lambda|^2 |f_{k}|}{|\lambda^2+ |k|^2|}\Big)^2\!\leq C_{3} (1+\delta^{-1})^{1/2} (\|f\|_{H^1}+\|g\|_{L^2})
\]
for some $C_{3}\geq0$ by Lemma \ref{lem:elementarycomplex} (i) and (iii).
Now \eqref{eq:aprioriestuRen2} implies
\begin{align*}
|\lambda|^2 \sum_{m,n} |u_{m,n}|^2 \big[1 - (1+\delta) z_{m,n}\big] \leq C_{3}^2 (1+\delta^{-1}) \Big(\|g\|_{L^2} + \|f\|_{H^1}\Big)^2,
\end{align*}
where
\[
z_{m,n} := \frac{m^2}{|\lambda^2+ m^2+(n+1)^2|^2}\leq \frac{1}{4a^2}\qquad(m,n\in\Z)
\]
by \eqref{eqlambdamin}. As in the previous step this yields a constant $C_{4}\geq0$ such that, for $\varepsilon>0$ such that $|a|=\frac{1}{2}+\varepsilon$,
\[
\|\lambda u\|_{H^1(\T^{2})}  \leq  C_{4}\max(1,\varepsilon^{-1})(\|f\|_{H^1(\T^{2})}+ \|g\|_{L^2(\T^{2})}).
\]
This completes the proof of \eqref{eq:toproveespilonest}.
\end{proof}

\begin{proof}[Proof of Theorem \ref{thm:WaveRen}]
The inequalities $\w_{0}(T)\geq \frac{1}{2}$ and $\w_{0}(S)\geq\frac{1}{2}$ follow from \cite{Renardy94}. Lemma \ref{lem:WaveRen} shows that the operators $-\frac{1}{2} +A$ and $-\frac{1}{2}-A$ satisfy the conditions of Theorem \ref{thm:polynomialgrowthHS} with $g(t) = \max(1/t,1)$ for $t>0$, and the latter theorem concludes the proof.
\end{proof}

\subsection*{Acknowledgements}

The authors would like to thank Yuri Tomilov for helpful comments, and the anonymous referee for carefully reading the manuscript.

\bibliographystyle{plain}
\bibliography{Bibliografiegrowth}

\end{document}